\documentclass[10pt,a4paper,oneside,onecolumn,fleqn]{article}
\usepackage{mathrsfs}
\usepackage{amsfonts}
\usepackage{latexsym}
\usepackage{amsmath,amsthm}
\usepackage{epsf}
\usepackage{epsfig}
\usepackage{graphicx}
\usepackage{float}
\usepackage{indentfirst}
\usepackage{multirow}
\usepackage{bigdelim}
\usepackage{cite}
\usepackage{amsbsy}
\usepackage{amssymb, amsmath}
\usepackage{epsfig}
\usepackage{indentfirst}
\usepackage{color}
\usepackage{subfigure}
\usepackage{caption}

\pagestyle{plain} \headsep 0pt \topmargin 0pt \headheight 0pt
\oddsidemargin 0pt \evensidemargin 0pt \textheight 23 true cm
\textwidth 16 true cm \parskip 0mm
\parindent 20pt
\baselineskip 20pt

\renewcommand{\arraystretch}{1}

\theoremstyle{plain}
\newtheorem{thm}{\bf Theorem}[section]
\newtheorem{con}[thm]{\bf Construction}

\newtheorem{lem}[thm]{\bf Lemma}

\theoremstyle{definition}
\newtheorem{defi}{\bf Definition}
\newtheorem{exam}{\bf Example}

\theoremstyle{remark}


\title{\bf Constructions of regular sparse anti-magic squares$^\triangle$ }
\author{ Guangzhou Chen$^{1*}$, Wen Li$^2$, Ming Zhong$^3$, Bangying Xin$^2$    \\
\small1. {\it Henan Engineering Laboratory for Big Data Statistical Analysis and Optimal Control, }\\
\small {\it School of Mathematics
and Information Science, Henan Normal University, Xinxiang, 453007, P.R.China}\\
\small2. {\it School of Science, Xichang University, Sichuan, 615000, P.R.China}\\
\small3. {\it Central Primary School of Tingzi town,  Dazhou district, Sichuan, 635011, P.R. China}\\
}
\date{}
\begin{document}
\maketitle
\begin{center}
\begin{minipage}{16cm}
{\bf Abstract}

\vspace{0.2cm} \ \ \ \ \ \ Graph labeling is a well-known and intensively investigated problem in graph
theory. Sparse anti-magic squares are useful in
constructing vertex-magic labeling for graphs. For
positive integers $n,d$ and $d<n$, an
$n\times n$ array $A$  based on $\{0,1,\cdots,nd\}$ is called \emph{a
sparse anti-magic square of order $n$ with density $d$},
denoted by SAMS$(n,d)$, if each element of $\{1,2,\cdots,nd\}$ occurs exactly one entry of $A$,
and its row-sums, column-sums and two main
diagonal sums constitute a set of $2n+2$ consecutive integers. An
SAMS$(n,d)$ is called \emph{regular} if there are exactly $d$ positive
entries in each row, each column and each main diagonal. In this
paper, we investigate the existence of regular sparse anti-magic squares
of order $n\equiv1,5\pmod 6$, and it is proved that for any $n\equiv1,5\pmod 6$, there exists a regular SAMS$(n,d)$
if and only if $2\leq d\leq n-1$.

{\textit{Keywords:}} Magic square; Sparse; anti-magic square;
Vertex-magic labeling; Latin square
\end{minipage}
\end{center}

\vskip 0.5cm

 {  \begingroup\makeatletter  \let\@makefnmark\relax  \footnotetext{$^\triangle$ This work was supported by National Natural Science Foundation of China (Grant Nos. 11871417 and 71962030).\\
\mbox{}\hspace{0.18in} $^*$Corresponding author:  G. Chen (chenguangzhou0808@163.com) }

\section{Introduction}

Magic squares and their various generalizations have been objects of
interest for many centuries and in many cultures. A lot of work has
been done on the constructions of magic squares, for more details,
the interested reader may refer to \cite{Abe,Ahmed,Andrews,Hand} and
the references therein.

\emph{An anti-magic square of order $n$} is an $n\times n$ array
with entries consisting of $n^2$ consecutive nonnegative integers
such that the row-sums, column-sums and two main diagonal sums
constitute a set of consecutive integers. Usually, the main diagonal
from upper left to lower right is called \emph{the left diagonal},
another is called \emph{the right diagonal}. The existence of an
anti-magic square has been solved completely by Cormie et al (\cite{Js,J1}).
It was shown that there exists an anti-magic square
of order $n$ if and only if $n\geq 4$.

Sparse magic square had played an important role in the construction of sparse anti-magic square. For
positive integers $n$ and $d$ with $d<n$, an $n\times n$ array $A$ based on
$\{0,1,\cdots,nd\}$ is called a \emph{sparse magic square of order $n$
with density $d$}, denoted by SMS$(n,d)$, if each element of $\{1,2,\cdots,nd\}$ occurs exactly one entry of $A$,
and its row-sums, column-sums and two main diagonal sums is the same. An SMS$(n,d)$ is
called \emph{regular} if there exist exactly $d$ non-zero elements in each
row, each column and each main diagonal. The existence of a regular
SMS$(n,d)$ has been solved completely by Li et al (\cite{Li}). It was
shown that for any positive integers $n$ and $d$ with $d<n$, there exists a regular SMS$(n,d)$ if and only if $d\geq
3$ when $n$ is odd and  even $d\geq 4$ when $n$ is even.

Sparse anti-magic squares are generalizations of anti-magic
squares. For positive integers $n$ and $d$ with $d<n$, let $A$ be an $n\times
n$ array with entries consisting of $0,1,\cdots,nd$ and let $S_A$ be
the set of row-sums, column-sums and two main diagonal sums of $A$.
We call $S_A$ the \emph{sum set} of $A$.
Then $A$ is called a \emph{sparse anti-magic square of order $n$
with density $d$}, denoted by SAMS$(n,d)$, if each element of $\{1,2,\cdots,nd\}$ occurs exactly
one entry of $A$ and $S_A$ consists of
$2n+2$ consecutive integers. In \cite {G4}, an SAMS$(n,d)$ is also
called a \emph{sparse totally anti-magic square}. An SAMS$(n,d)$ is
called \emph{regular} if all of its rows, columns and two main
diagonals contain $d$ positive entries.  As an example, a regular
SAMS$(5,2)$ is listed below.

 \begin{center}
 {\renewcommand\arraystretch{0.8}
\setlength{\arraycolsep}{3.5pt}
\footnotesize
        $A=\begin{array}{|c|c|c|c|c|}      \hline
1&	6&	&&		\\\hline
	&&	2&	8	&\\\hline
5&&&&				3\\\hline
&	9&	7&&	\\\hline
	&&&		4&	10\\\hline
\end{array}$\ .}
\end{center}
\noindent Here, empty entries of $A$ indicate 0.
It is readily checked that the element set of $A$ consists of
$0,1,2,\cdots,10$, $S_A=\{6,7,\cdots,16,17\}$ and all of its
rows, columns and two main diagonals contain $2$ positive entries.

Sparse anti-magic squares and sparse magic squares are useful in graph theory. In paticular,
they can be used to construct the vertex-magic total labeling for
bipartite graphs, trees and cubic graphs, see \cite {G4,GrayJ1,GrayJ2,GrayJ3,Zhux,Zhux1} and the references therein.

Recently, the authors Chen et al(\cite{chen1,chen2,chen3}) proved the following results.
\begin{lem}(\cite{chen1})\label{01}
There exists a regular SAMS$(n,n-1)$ if and only if $n\geq 4$.
\end{lem}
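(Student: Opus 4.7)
\bigskip

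\noindent\textbf{Proof proposal.} I would handle the two directions separately. For necessity, I would rule out $n\leq 3$ by direct counting. When $n=2$ (so $d=1$) the only positive entries are $1$ and $2$, so every row-, column- and diagonal-sum lies in $\{1,2\}$, far too few distinct values for the six required consecutive sums. When $n=3$ (so $d=2$) the entries sum to $21$, so the eight required sums sum to $42+(d_1+d_2)$; equating with $\sum_{i=0}^{7}(k+i)=8k+28$ and using the small range of possible diagonal sums leaves only a finite list of candidate progressions, each eliminated by direct case analysis on placements.

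For sufficiency, the plan is constructive. A regular SAMS$(n,n-1)$ has exactly one zero in each row, each column and each main diagonal, so the $n$ zero-positions form a permutation $\sigma$ of $\{1,\ldots,n\}$ with exactly one fixed point $\sigma(i)=i$ (placing a zero on the left diagonal) and exactly one anti-fixed point $\sigma(j)=n+1-j$ (placing a zero on the right diagonal). My first step would be to exhibit an explicit family of such $\sigma$ for every $n\geq 4$, chosen to be compatible with the filling scheme described next.

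On the $n(n-1)$ remaining cells I would place the labels $\{1,2,\ldots,n(n-1)\}$ by a Latin-square-type scheme---for odd $n$, via a modular map $(i,j)\mapsto ai+bj \pmod n$ with $\gcd(a,n)=\gcd(b,n)=1$, translated into the label set---so that the row- and column-sums automatically form an arithmetic progression with common difference~$1$. Then I would compute the two diagonal sums and, if necessary, perform a localised swap of two entries that preserves the row- and column-sum pattern, to land the diagonal sums in the remaining two slots of the target set of $2n+2$ consecutive integers. Small cases such as $n=4,5,6$ would be treated directly by writing out explicit arrays, and larger $n$ would be handled by splitting into residue classes (at least by parity, and likely finer subcases for the even branch) with a uniform construction per class.

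\emph{Main obstacle.} Forcing $2n$ consecutive row- and column-sums with a Latin-square scheme is fairly standard; the hard part is \emph{synchronising} the two diagonal sums with those $2n$ sums so that the full collection of $2n+2$ sums forms a single run of consecutive integers. The diagonals traverse cells whose labels are only loosely constrained by the row/column scheme, so hitting the prescribed two leftover slots---while simultaneously respecting the constraint that each diagonal must already contain a forced zero---is where I expect the real technical difficulty to lie, and is the likely reason the construction must split into several subcases (with a few small values of $n$ handled by ad hoc examples).
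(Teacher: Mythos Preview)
This lemma is not proved in the present paper: it is quoted verbatim from \cite{chen1} and used as a black box in the final assembly of Theorem~\ref{mainth}. Consequently there is no ``paper's own proof'' to compare your proposal against.

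As for the proposal itself, it is a plan rather than a proof. The necessity direction is essentially fine: the $n=2$ case is immediate, and the $n=3$ case can indeed be disposed of by a short exhaustive check (though you have not actually carried it out). The sufficiency direction, however, remains a sketch. You correctly identify the crux --- forcing the two diagonal sums to land in the two slots left over by the $2n$ row- and column-sums --- but you do not supply a mechanism that provably achieves this. ``Perform a localised swap \ldots\ if necessary'' is exactly the step that needs a concrete, verifiable rule; without one, the argument has a genuine gap. The original paper \cite{chen1} (not reproduced here) handles this by explicit case-split constructions rather than by a single modular formula plus ad hoc repair, precisely because the diagonal constraints do not fall out of a generic $(i,j)\mapsto ai+bj$ scheme. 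If you want to complete your route, you would need to exhibit, for each residue class of $n$, an explicit permutation $\sigma$ for the zero positions together with an explicit filling and an explicit (not hypothetical) swap, and then verify the full sum set.
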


\begin{lem}(\cite{chen2})\label{02}
There exists a regular SAMS$(n,n-2)$ if and only if $n\geq 4$.
\end{lem}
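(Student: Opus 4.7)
The necessity is a short counting exercise. For $n\le 2$ the density $d=n-2$ is non-positive, which is excluded by definition. For $n=3$ we would have $d=1$, so the three nonzero entries must be exactly $\{1,2,3\}$ placed as a permutation matrix that also meets both main diagonals. The three row sums are then a permutation of $1,2,3$ and the column sums are too, so $S_A$ contains at most $3+2=5$ distinct values, whereas $2n+2=8$ are required.

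For sufficiency, my approach is explicit construction in three steps. First, I specify the $2n$ zero cells so that each row, each column and each main diagonal contains exactly two of them; a natural family is the union of two ``broken diagonals'' $\{(i,\,i+a \bmod n)\}$ and $\{(i,\,i+b \bmod n)\}$ with $a,b$ chosen (depending on $n\bmod 6$) so that both broken diagonals meet each real main diagonal in exactly one cell. Second, for $n\ge 5$ I invoke the Li et al.\ theorem quoted above to obtain a regular SMS$(n,n-2)$ on this support (whose parity hypothesis is satisfied automatically because $d=n-2$ has the same parity as $n$, so $n$ even forces $d$ even), giving a common magic line-sum $\mu=(n-2)(n(n-2)+1)/2$; for the residual small case $n=4$, SMS$(4,2)$ does not exist and I would give an explicit ad-hoc array. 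Third, starting from the magic filling I apply a sequence of value exchanges $(x,y)\leftrightarrow(x',y')$ with $x+y=x'+y'$, which preserve the multiset $\{1,\dots,n(n-2)\}$ and the zero pattern, but shift selected line sums by $\pm(x-x')$. The target is to move the $2n+2$ equal sums $\mu$ into the $2n+2$ consecutive integers $\mu-n,\dots,\mu+n+1$.

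The genuine difficulty is the third step. Each admissible swap shifts exactly two row sums, two column sums, and possibly one or both diagonal sums, so realising $n$ prescribed row shifts, $n$ prescribed column shifts and $2$ prescribed diagonal shifts simultaneously---while respecting the fixed zero pattern and keeping all nonzero entries distinct---amounts to an integer-realisability problem with combinatorial side conditions. I would attack it by building small local ``shift gadgets'' confined to a $2\times 2$ or $2\times 3$ block of nonzero cells whose combined effect is a unit shift on one row and one column, chaining $O(n)$ disjoint gadgets along the rows and columns, and then performing a boundary correction on the two main diagonals. Because the diagonal conditions break the natural row--column symmetry and because the support pattern from Step 1 depends on $n\bmod 6$, the gadget layouts will likely need to be described case by case, with a handful of very small $n$ (probably $n\in\{4,5,6,7\}$) requiring exceptional direct constructions that one would simply exhibit as tables.
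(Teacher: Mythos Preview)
This lemma is not proved in the present paper at all: it is quoted verbatim from \cite{chen2} and used as a black box in the final assembly of Theorem~\ref{mainth}. So there is no ``paper's own proof'' to compare against here; the actual argument lives in a separate article whose methods are direct, case-split constructions rather than the magic-to-anti-magic perturbation you outline.

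On the merits, your proposal is a strategy rather than a proof, and the gap is precisely where you say the ``genuine difficulty'' lies. Step~3 is not carried out: you describe hypothetical $2\times2$ or $2\times3$ ``shift gadgets'' and a chaining procedure, but you do not exhibit the gadgets, do not verify that enough pairwise-disjoint gadgets fit inside the fixed nonzero support, and do not show that the induced shifts can be made to realise the exact target vector $(-n,-n+1,\ldots,n+1)$ on the $2n+2$ line sums simultaneously. Each swap moves two row sums and two column sums (and possibly diagonal sums) at once, so the row shifts and column shifts are not independent; turning this into a genuine existence argument requires either an explicit recursive scheme or a feasibility lemma, neither of which you supply. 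Your own closing sentences concede that the diagonal corrections and the small cases $n\in\{4,5,6,7\}$ would need ad~hoc treatment that you do not give.

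There is also a smaller issue in Step~2: the Li et al.\ result guarantees the existence of \emph{some} regular SMS$(n,n-2)$, not one on a support you have prescribed in Step~1. Either Step~1 is redundant (take whatever support the SMS comes with) or you are implicitly assuming a much stronger statement than what is cited. Finally, your necessity argument for $n=3$ assumes a regular zero pattern exists and then counts sums, but in fact no $3\times3$ permutation matrix meets each main diagonal in exactly one cell, so the pattern itself is already impossible; the conclusion is correct but the reasoning should be adjusted.
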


\begin{lem}(\cite{chen3})\label{03}
There exists a regular SAMS$(n,d)$ for $d\in\{3,5\}$ if and only if $n\geq d$.
\end{lem}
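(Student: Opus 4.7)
Necessity is immediate from the definition of an SAMS$(n,d)$, which requires $d<n$, so for $d\in\{3,5\}$ one must have $n\geq d+1$. For sufficiency I would treat $d=3$ and $d=5$ separately and construct an explicit regular SAMS$(n,d)$ for every admissible $n$. A first bookkeeping step is to pin down the target sum-set: the $n$ row sums, the $n$ column sums and the two main-diagonal sums are $2n+2$ consecutive integers whose row-portion and column-portion each add to $nd(nd+1)/2$; these two constraints determine the window of consecutive integers up to a small parity adjustment for the two diagonal sums, specifying exactly which row, column and diagonal values I need to realize.

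For each fixed $d$, I would split into residue classes of $n$ modulo a small integer (typically $n \bmod 6$, since $\gcd(6,n)$ controls how cyclic arithmetic progressions align with the two main diagonals) and in each class carry out the construction in two stages. First, choose a \emph{support pattern}: a $0/1$ array with exactly $d$ ones in every row, every column and on each main diagonal. Such a pattern is produced by taking a union of $d$ disjoint (partial) permutation matrices whose cyclic shifts in $\mathbb{Z}_n$ are chosen so that the left and right diagonals each meet the support in exactly $d$ cells. Second, perform a \emph{value assignment}: populate the chosen cells with $1,2,\ldots,nd$ using a difference method along rows so that the row sums form an arithmetic progression, then permute the columns (or apply a Kotzig-type adjustment) to force the required column-sum progression. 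The small base cases ($n\in\{4,5\}$ for $d=3$; a handful of values for $d=5$) would be handled by exhibiting explicit arrays, while larger $n$ would be handled either by a uniform cyclic construction or by a recursive ``border'' construction that adjoins suitably chosen rows and columns to a smaller regular SAMS of the same density.

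The main obstacle, as is typical in anti-magic problems, is reconciling the two diagonal sums with the $2n$ row and column sums. A clean cyclic construction usually yields row, column and diagonal sums that form the desired $(2n+2)$-term window only up to an offset that disturbs the diagonals. I would expect to absorb this error by reserving a few ``free'' cells (for instance on the diagonals themselves or in the four corners) whose values can be swapped at the end without breaking the row/column regularity, using a pair of entries of compensating weights to nudge each diagonal sum into its intended slot. Verifying in each residue class that such a swap actually exists within the ranges prescribed by the value assignment is where the bulk of the case-by-case work would lie.
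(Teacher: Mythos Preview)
The paper does not prove this lemma at all: Lemma~\ref{03} is quoted verbatim from \cite{chen3} and is used in the present paper only as a black box in the final assembly of Theorem~\ref{mainth}. There is therefore no ``paper's own proof'' against which to compare your attempt.

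As to your proposal itself, what you have written is a strategy outline rather than a proof. The two-stage plan (choose a regular $0/1$ support by cyclic shifts, then fill with values so that row and column sums are arithmetic progressions, finally repair the two diagonals by local swaps) is indeed the shape of the argument in \cite{chen3}, but all of the actual content lies in the parts you defer: exhibiting, for every residue class of $n$, specific shift sets and specific value assignments, and then checking that the repair swaps exist. In that paper this takes roughly thirty pages of case analysis for $d\in\{3,5\}$; the construction is not uniform across residues mod~$6$ (or mod~$12$), and several small orders require ad~hoc arrays. Your sketch correctly anticipates where the work is, but none of that work is carried out, so the proposal as it stands is not a proof.

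One small correction: you write that the definition forces $n\geq d+1$, which is true, but the lemma as stated says $n\geq d$. In the body of the paper (proof of Theorem~\ref{mainth}) the authors in fact invoke it only for $n>d$, consistent with the standing hypothesis $d<n$; the ``$n\geq d$'' in the lemma is either a typo or reflects a slightly broader convention in \cite{chen3}.
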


In this paper, we investigate the existence of regular sparse anti-magic
squares of order $n\equiv1,5 \pmod 6$ and obtain the
following theorem.
\begin{thm}\label{mainth}
For any $n\geq 5$ and $n\equiv1,5 \pmod 6$, there exists a regular SAMS$(n,d)$ if and only if $2\leq d\leq n-1$.
\end{thm}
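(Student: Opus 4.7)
\emph{Necessity.} The bound $d\leq n-1$ is built into the definition of SAMS$(n,d)$, so only $d\geq 2$ needs an argument. If $d=1$, then the $n$ positive entries $1,2,\dots,n$ occur once per row and once per column, so the row-sums are a permutation of $\{1,\dots,n\}$ and the column-sums likewise; together with the two diagonal sums, the set of line-sums has at most $n+2$ distinct values and cannot realise $2n+2$ consecutive integers.

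\emph{Sufficiency: strategy and base cases.} Lemmas \ref{01}--\ref{03} already give the densities $d\in\{3,5,n-2,n-1\}$, so only $d\in\{2,4\}\cup\{6,7,\dots,n-3\}$ remains. Because $n\equiv 1,5\pmod 6$ is odd with $\gcd(n,6)=1$, the cyclic transversals $\{(i,i+c\bmod n):i\in\mathbb{Z}_n\}$ of $\mathbb{Z}_n\times\mathbb{Z}_n$ meet every row, every column, and both main diagonals in patterns compatible with arithmetic-progression labellings; this is the structural feature I will exploit throughout. In Stage A I would construct regular SAMS$(n,d)$ directly for a short base set, say $\mathcal{B}=\{2,4,6,7\}$, by selecting $d$ transversals and labelling them so that small and large labels alternate along rows and columns, with the entries falling on the two main diagonals chosen to realise the two extreme line-sums demanded by the consecutive-sum property. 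The $d=2$ case is a direct generalisation of the $n=5$ example in the introduction.

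\emph{Recursive combining step.} Stage B is a composition lemma: if $A$ is a regular SAMS$(n,d_1)$ and $B$ is a regular SMS$(n,d_2)$ whose supports are disjoint and whose labels, after a constant shift, bijectively realise $\{1,\dots,n(d_1+d_2)\}$, then $A+B$ is a regular SAMS$(n,d_1+d_2)$. This holds because adding a magic square $B$ shifts every row-sum, column-sum and diagonal-sum by the same constant, and therefore preserves the ``$2n+2$ consecutive integers'' property. Combining this with the existence of regular SMS$(n,d_2)$ for all odd $n$ and all admissible $d_2\geq 3$ (Li et al.), one starts from an appropriate $d_0\in\mathcal{B}$ and adds a magic block of density $d-d_0$ to reach any remaining density, with $d_0$ chosen to match the parity of $d-d_0$ and to accommodate the disjoint-support requirement.

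\emph{Main obstacle.} The crux is Stage A together with arranging the disjoint supports needed in Stage B. Each main diagonal meets every row and column, so the two diagonal sums are not free parameters once the row- and column-sums have been set; the specific entries placed on the diagonals must realise exactly the two extremal values of the target arithmetic progression. My plan is to build both the SAMS skeleton and the SMS blocks on a common transversal decomposition of $\mathbb{Z}_n\times\mathbb{Z}_n$, using disjoint families of transversals and labelling each transversal by an arithmetic progression whose common difference is chosen modulo $n$; the coprimality $\gcd(n,6)=1$ is precisely what guarantees that such a labelling behaves correctly on both main diagonals simultaneously, and it is also what is likely to force case splits modulo small primes when $d$ is close to $n-3$.
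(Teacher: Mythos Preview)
Your overall strategy is the paper's: direct constructions for a few small densities, then a composition lemma (regular SAMS$(n,d_0)$ plus a compatible regular SMS$(n,d-d_0)$ gives a regular SAMS$(n,d)$), with both ingredients built on a common diagonal-Latin-square template so that their supports are disjoint by design. The paper's Theorem~\ref{compat-R3} is exactly your Stage~B lemma, and Theorem~\ref{mainth4-1} carries out the composition to cover all $d\in[6,n-3]$ starting from the single base $d_0=2$.

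Two points where your sketch diverges from what actually needs to be done. First, the base set $\{2,4,6,7\}$ is larger than necessary: the paper uses only $d_0=2$ (Theorem~\ref{SAMS(n,2)}) to seed the recursion, handles $d=4$ by a separate direct construction (Theorem~\ref{mainth5-1}), and takes $d\in\{3,5,n-2,n-1\}$ from Lemmas~\ref{01}--\ref{03}. Second, and more importantly, your appeal to Li et al.\ for the SMS ingredient does not work as stated: an arbitrary regular SMS$(n,d_2)$ from~\cite{Li} comes with no control over its support, so compatibility with your chosen SAMS$(n,d_0)$ is not guaranteed. You recognise this in your final paragraph, where you propose building both pieces on a shared transversal decomposition --- and that is precisely what the paper does --- but you should not expect \cite{Li} to supply the block. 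Instead the paper introduces \emph{symmetric forward diagonals arrays} (Section~3), derived from symmetric diagonal Kotzig arrays, and wraps them onto the same diagonal Latin square $B=(\langle 2i+j-1\rangle_n)$ that carries the SAMS$(n,2)$, placing the SMS entries at positions $(i,j)$ with $b_{i,j}\notin\{m-2,m,m+2,m+4\}$ so that compatibility is automatic. Building this SFD machinery (Construction~\ref{SymDK-SFD}, Theorems~\ref{SymDKA} and~\ref{SFD}) and then verifying that the wrapped array really is a regular SMS with the required support (the proof of Theorem~\ref{mainth4-1}) is the bulk of the paper's technical content; your plan is right in spirit but underestimates this step by outsourcing it.
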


For convenience, the following notations are used throughout this paper.
Let $\mathbb{Z}$ be the set of integers, $I_n=\{1,2,3,\cdots, n\}$ and
we always use $I_m$ and $I_n$ to label the rows and columns of an $m\times n$ array respectively.
Let $a,b\in\mathbb{Z}$ and $[a,b]$ be the set of integers $v$
such that $a\leq v\leq b$. Suppose $A$ is an array based on $\mathbb{Z}$, let
$G(A)$, $R(A)$ and $C(A)$ be the set of non-zero elements,
the set of row-sums and the set of column-sums of $A$ respectively.
Let $l(A)$ and $r(A)$ be the sum of the elements in the left diagonal and
the right diagonal of $A$ respectively. Then $S_A=R(A)\cup C(A)\cup \{l(A),r(A)\}$.
Let $a$ and $n$ be integers,
\begin{center}
  $  \langle a\rangle_n= \left\{
          \begin{array}{ll}
              r,     &  if \ \ \ n\nmid a,\ a = mn+r\ \ and\ \ 0<r<n, \\
               n,  & if \ \ \ n|a.\\
          \end{array}
       \right.$
\end{center}
Clearly, $1\leq\langle a\rangle_n\leq n$.

The remainder of this paper is organized as follows. In Section 2, we
show that there exists a regular SAMS$(n,2)$ for $n\equiv1,5 \pmod 6$ and $n\geq 5$ via direct construction.
In Section 3, we introduce a symmetric forward diagonals array which is important
building block in the construction for a regular sparse anti-magic squares.
In Section 4, we prove that there exists a regular SAMS$(n,d)$ for $n\equiv1,5 \pmod 6$ and $d\in [6,n-3]$.
Finally, the proof of Theorem \ref{mainth} is presented in Section 5.

\section{The existence of a regular SAMS$(n,2)$ for $n\equiv1,5 \pmod 6$ and $n\geq5$ }

In this section, we shall prove that there exists a regular SAMS$(n,2)$ for $n\equiv1,5 \pmod 6$ and $n\geq5$.
The idea of our construction is divided into three steps. Firstly, we  give a special array $A$ and a Latin square $B$.
Secondly we shall put the elements of $A$ into the Latin square $B$ to obtain $W$
such that $W$ is a regular SAMS for $n\equiv 1 \pmod 6$ and a near regular SAMS for $n\equiv5 \pmod 6$.
Furthermore, for $n\equiv 5 \pmod 6$, we need to adjust some columns of $W$ to obtain a regular SAMS.

We need the definition of Latin square in the proof of the following.
A \emph{Latin square} of order $n$ is an $n\times n$ array in which each cell contains a
single symbol from an $n$-set $S$, such that each symbol occurs exactly once in each
row and exactly once in each column. A \emph{transversal} in a Latin square of order $n$ is
a set of $n$ cells, one from each row and
column, containing each of the $n$ symbols exactly once.
A Latin square of order $n$ is a \emph{diagonal Latin square} if two main diagonals are transversals.

\begin{thm}\label{SAMS(n,2)}
There exists a regular SAMS$(n,2)$ for $n\equiv1,5 \pmod 6$ and $n\geq5$.
\end{thm}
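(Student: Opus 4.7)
The plan is to follow the three-step strategy announced just before the theorem.

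In Step~1 I would build an auxiliary value-array $A$ that lists the numbers $\{1,2,\ldots,2n\}$ in a prescribed order, together with a diagonal Latin square $B$ of order $n$ in which two designated symbols $s_{1},s_{2}$ each determine a transversal that is, simultaneously, a transversal of both main diagonals. Since $n\equiv 1,5 \pmod 6$ forces $n$ to be odd and coprime to $3$, a cyclic construction of the form $B[i,j]=\langle i+aj\rangle_{n}$ with a carefully chosen multiplier $a$ is a natural candidate for $B$. The array $W$ is then obtained by dropping the first $n$ entries of $A$ into the $s_{1}$-cells of $B$ and the last $n$ entries of $A$ into the $s_{2}$-cells. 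By the choice of $B$, every row, every column, and each main diagonal of $W$ automatically contains exactly two positive entries, so $W$ is already a regular two-density array in the structural sense.

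In Step~2 I would verify that the row sums, column sums, and two diagonal sums of $W$ together form a set of $2n+2$ consecutive integers. The row sums should inherit an arithmetic progression of common difference $1$ from the way $A$ is listed against the $s_{1}$- and $s_{2}$-transversals, while the column-sum computation reduces to a single linear identity modulo $n$; the two diagonal sums come out as two further specific values in the same interval. When $n\equiv 1 \pmod 6$ the three blocks fit together into one interval of $2n+2$ consecutive integers, so $W$ is already a regular SAMS$(n,2)$.

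For $n\equiv 5 \pmod 6$, the same construction will only produce a ``near-regular'' sparse anti-magic square whose row sums and diagonal sums lie in the correct block but whose column sums miss a small, explicitly controlled portion of the target interval. In Step~3 I would repair this by a local swap of values between pairs of cells lying in a common row (so row sums are preserved) but in different columns away from the two main diagonals (so column sums shift by a prescribed amount while diagonal sums stay fixed). The main obstacle I anticipate is Step~1: choosing $A$ and the multiplier $a$ so that the row-sum, column-sum and diagonal-sum blocks actually fit inside a single interval of $2n+2$ consecutive integers. For $n\equiv 5 \pmod 6$ this alignment fails by a small explicit defect, which both forces the Step~3 correction and simultaneously tells us exactly how large and where that correction must be.
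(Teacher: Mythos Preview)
Your broad outline matches the paper's approach: a $2\times n$ value array $A$, a cyclic diagonal Latin square $B$ (the paper takes $b_{i,j}=\langle 2i+j-1\rangle_n$, which is of your type with the roles of $i$ and $j$ interchanged), and placement of the two rows of $A$ along the transversals determined by two fixed symbols of $B$. The structural regularity and the verification that $R(W)\cup C(W)$ is a run of $2n$ consecutive integers with one gap proceed essentially as you sketch.

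The genuine gap is in Step~3: you have misidentified what fails when $n\equiv 5\pmod 6$. In the paper's construction the row sums and column sums of $W$ are already correct in \emph{both} residue classes; what goes wrong is the \emph{left diagonal sum}, which comes out to $2n+3$ instead of the required $2n+1$. Your proposed repair---swapping values within a common row, in columns away from the diagonals, so as to shift column sums while keeping diagonal sums fixed---is therefore aimed at the wrong invariant and cannot succeed: any swap that avoids the diagonals leaves the defective diagonal sum untouched. The paper's actual fix goes in the opposite direction: it swaps two pairs of entire columns ($k\leftrightarrow k+2$ and $n{+}1{-}k\leftrightarrow n{-}1{-}k$), which preserves the multiset of column sums, preserves every row sum, preserves the right diagonal, but changes which two nonzero entries land on the left diagonal and brings that sum down to $2n+1$. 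So the correction must involve cells \emph{on} the diagonal, not away from it.
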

\begin{proof}
For each $n\equiv1,5 \pmod 6$ and $n\geq5$, it can be written as $n=2m+1$, where $m>1$.
Construct a special $2\times n$ array $A=(a_{i,j})$
over $[1,4m+2]$, where $i=1,2$, $j\in I_n$ and

$$a_{1,j}=\left\{
\begin{array}{lll}
n+j, & j\in[1,m-1],\\
n+j+1, & j\in[m,2m],\\
n, & j=2m+1,\\
\end{array}
\right. \ \ \ \ \
a_{2,j}=\left\{
\begin{array}{lll}
j, & j\in[1,m],\\
3m+1,& j= m+1, \\
j-1, & j\in [m+2,2m+1].\\
\end{array}
\right.$$

Let $R_k$, $k=1,2$, be the set of the elements in the $k$-th row of $A$.
It is easy to see that
\vskip 6pt

\mbox{}\hspace{1.0in}
$R_1=[n+1,n+m-1]\cup[n+m+1,n+2m+1]\cup\{n\}$

\mbox{}\hspace{1.22in} $=[2m+2,3m]\cup[3m+2,4m+2]\cup\{2m+1\}$

\mbox{}\hspace{1.22in} $=[2m+1,4m+2]\setminus\{3m+1\}$.

\begin{center}
$R_2=[1,m]\cup\{3m+1\}\cup[m+1,2m]=[1,2m]\cup\{3m+1\}$.
\end{center}
\noindent
Then we have $R_1\cup R_2=[1,4m+2]$.

Let $S_1$ and $S_2$ be the set of column-sums and forward diagonal-sums respectively.
By a simple calculation, we have

$S_1=\bigcup\limits_{j=1}^{n}\{a_{1,j}+a_{2,j}\}$

\mbox{}\hspace{0.18in}$=\bigcup\limits_{j=1}^{m-1}\{a_{1,j}+a_{2,j}\}\bigcup\{a_{1,m}+a_{2,m},
a_{1,m+1}+a_{2,m+1}\}\bigcup\limits_{j=m+2}^{2m}\{a_{1,j}+a_{2,j}\}\bigcup\{a_{1,2m+1}+a_{2,2m+1}\}$

\mbox{}\hspace{0.16in} $=\bigcup\limits_{j=1}^{m-1}\{n+2j\}\bigcup\{2n,3n+1\}\bigcup\limits_{j=m+2}^{2m}\{n+2j\}\bigcup\{n+2m\}$.

\mbox{}\hspace{0.16in} $=[\bigcup\limits_{j=1}^{2m}\{n+2j\}\setminus\{2n+1\}]\bigcup\{2n,3n+1\}$.

$S_2=\bigcup\limits_{j=1}^{n-1}\{a_{1,j}+a_{2,j+1}\}\bigcup\{a_{1,2m+1}+a_{2,1}\}$

\mbox{}\hspace{0.18in}$=\bigcup\limits_{j=1}^{m-1}\{a_{1,j}+a_{2,j+1}\}
\bigcup\{a_{1,m}+a_{2,m+1}\}\bigcup\limits_{j=m+1}^{2m}\{a_{1,j}+a_{2,j+1}\}\bigcup\{a_{1,2m+1}+a_{2,1}\}$

\mbox{}\hspace{0.16in} $=\bigcup\limits_{j=1}^{m-1}\{n+2j+1\}\bigcup\{(n+m+1)+(3m+1)\}
\bigcup\limits_{j=m+1}^{2m}\{n+2j+1\}\bigcup\{n+1\}$

\mbox{}\hspace{0.16in} $=[\bigcup\limits_{j=1}^{2m}\{n+2j+1\}\setminus\{2n\}]\bigcup\{3n,n+1\}$.
\vskip 6pt
\noindent It follows that $S_1\cup S_2=[n+1,3n+1]\backslash\{2n+1\}$.

Let $B=(b_{i,j})$, where $b_{i,j}=\langle2i+j-1\rangle_n,$ $i,j\in I_n$, note that $n\equiv1,5 \pmod 6$ and $n\geq5$,
then it is easy to check that $B$ is a diagonal Latin square of order $n$ over $I_n$ with
the property

$ b_{n+1-i,n+1-j}=\langle2(n+1-i)+(n+1-j)-1\rangle_n=\langle3n+1-(2i+j-1)\rangle_n=(n+1)-b_{i,j},$ i.e.
$$ b_{i,j}+b_{n+1-i,n+1-j}= n+1.$$
For each $j\in I_n$, define
\begin{center}
$f(x,j)=i$ \emph{if} $b_{i,j}=x$, that is, $f(b_{i,j},j)=i$
\end{center}
and let
\begin{center}
$g(s)=\langle m+2s-1\rangle_n$,\ $s\in I_n$,
\end{center}
then it is easy to see that for each $j\in I_n$, $f(x,j)$ is a bijection function from $I_n$ to $I_n$
since $B$ is a Latin square over $I_n$, $b_{i,j}$ can be regarded as the inverse of $f(x,j)$ for any given $j\in I_n$,
and $g$ is also a bijection function from $I_n$ to $I_n$.

We put $a_{1,s}$ and $a_{2,s}$, $s\in I_n$, into the entries $(f(m,g(s)),g(s))$ and
$(f(m+2,g(s)),g(s))$ of $B$ respectively, the other entries of $B$ are filled by $0$, denoted by $W=(w_{i,j})$,
where $i,j\in I_n$, that is, $w_{f(m,g(s)),g(s)}=a_{1,s}$ and $w_{f(m+2,g(s)),g(s)}=a_{2,s}$.

It is clear that the entries of the $s$-th column, $s\in I_n$, of $A$ are all putted into the
$g(s)$-column of $W$, so the non-zero elements in the same column of $W$
is just in the same column of $A$, then $C(W)=S_1$. We will show that $a_{1,s}$ and $a_{2,\langle s+1\rangle_n}$,
$s\in I_n$, are in the same row of $W$, we  need only to prove that for any $s\in I_n$,
$f(m,g(s))=f(m+2,g(s+1))$. Without loss of generality, suppose that $f(m,g(s))=\xi$, we have
$b_{\xi, g(s)}=\langle2\xi+g(s)-1\rangle_n=m$ by the definition of function $f$, and also have
\begin{center}
$b_{\xi, g(s+1)}=\langle2\xi+g(s+1)-1\rangle_n=\langle2\xi+g(s)+2-1\rangle_n=\langle2\xi+g(s)-1\rangle_n+2=m+2$.
\end{center}
It follows that $f(m+2,g(s+1))=\xi$, then the non-zero elements in the same row of $W$
is just in the forward diagonal of $A$, so $R(W)=S_2$.
It is clear that
\begin{center}
$R(W)\cup C(W)=S_1\cup S_2=[n+1,3n+1]\backslash\{2n+1\}$.
\end{center}

Next,  we shall consider the elements in the two main diagonals of $W$.
There are exactly two non-zero elements in the right diagonal of $W$ according to
the definition of the diagonal Latin square $B$.
It is easy to calculate that
$$a_{1,m+2}=w_{f(m,g(m+2)),g(m+2)}=w_{m,m+2}$$
since $g(m+2)=m+2$,\ \
$b_{i,j}=\langle 2i+j-1\rangle_n=\langle 2m+(m+2)-1\rangle_n=m$ when $i=m,\ j=m+2$, and
$f_{m,j}=i$, i.e. $f_{m,m+2}=m$,
and
$$a_{2,m+1}=w_{f(m+2,g(m+1)),g(m+1)}=w_{m+2,m}$$
since $g(m+1)=m$,\ \
$b_{i,j}=\langle 2i+j-1\rangle_n=\langle 2(m+2)+m-1\rangle_n=m+2$ when $i=m+2,\ j=m$, and
$f_{m+2,j}=i$, i.e. $f_{m+2,m}=m+2$.
Hence the sum of the elements in the right diagonal of $W$ is
$$w_{m,m+2}+w_{m+2,m}=a_{1,m+2}+a_{2,m+1}=(n+m+2+1)+(3m+1)=3n+2.$$

We shall divided it into two cases to deal with the left diagonal-sum below.

\textbf{Case 1:} For $n\equiv 1 \pmod 6$ and $n\geq 7$, it can be written as $n=6k+1$, where $k\geq 1$.
Note that $n=2m+1$, then $m=3k$.

There are exactly two non-zero elements in the left diagonal of $W$ according to
the definition of the diagonal Latin square $B$.
By simple calculation we have
$$a_{1,k+1}=w_{f(m,g(k+1)),g(k+1)}=w_{n-k,n-k}$$
since $g(k+1)=n-k$,\ \
$b_{n-k,n-k}=<2(n-k)+(n-k)-1>_n=3k=m$ and
$f(m,n-k)=n-k$, and
$$a_{2,n+1-k}=w_{f(m+2,g(n+1-k)),g(n+1-k)}=w_{k+1,k+1}$$
since $g(n+1-k)=k+1$,\ \
$b_{k+1,k+1}=<2(k+1)+(k+1)-1>_n=m+2$ and
$f(m+2,k+1)=k+1$.
Then the sum of the elements in the left diagonal of $W$ is
$$w_{k+1,k+1}+w_{n-k,n-k}=a_{2,n+1-k}+a_{1,k+1}=(n+1-k-1)+(n+k+1)=2n+1.$$
So, $W$ is a regular SAMS$(n,2)$.

\textbf{Case 2:}\ \ For $n\equiv 5 \pmod 6$ and $n\geq 5$,
it can be written as $n=6k-1$, where $k\geq 1$, then $m=3k-1$.

When $k=1$, a regular SAMS$(5, 2)$ is given as an example in Section 1.

When $k>1$, it is easy to check that there are also exactly two non-zero elements
in the left diagonal of $W$ according to the definition of the diagonal Latin square $B$, but their
sum is not equivalent to $2n+1$. In fact,
$$a_{1,5k}=w_{f(m,g(5k)),g(5k)}=w_{k,k}$$
since $g(5k)=k$,\ \
$b_{k,k}=<2k+k-1>_n=3k-1=m$ and
$f(m,k)=k$, and
$$a_{2,k+1}=w_{f(m+2,g(k+1)),g(k+1)}=w_{n+1-k,n+1-k}$$
since $g(k+1)=n+1-k$,\ \
$b_{n+1-k,n+1-k}=<2(n+1-k)+(n+1-k)-1>_n=3k+1=m+2$ and
$f(m+2,n+1-k)=n+1-k$.
So the sum of the elements in the left diagonal of $W$ is
$$w_{k,k}+w_{n+1-k,n+1-k}=a_{1,5k}+a_{2,k+1}=(5k+1+n)+(k+1)=2n+3\neq 2n+1.$$

The array $W^*=(w_{i,j}^*)$, $i,j\in I_n$, is obtained by exchanging column $k$ with column
$k+2$ and exchanging column $n+1-k$ with column $n+1-k-2$ of $W$. We list the elements in the
columns $k,k+2,n+1-k,n+1-k-2$ of $B$, $W$ and $W^*$ in the following tables respectively.
 \begin{center}
 {\renewcommand\arraystretch{0.7}
\setlength{\arraycolsep}{2pt}
\footnotesize
        $\begin{array}{|c|c|c|c|}
  \multicolumn{4}{c}{B }\\        \hline
	&	j=k	&	j=k+1	&	j=k+2	\\\hline
i=k-1	&	m-2	&	m-1	&	m	\\\hline
i=k	&	\emph{\textbf{m}}	&	m+1	&	m+2	\\\hline
i=k+1	&	m+2	&	m+3	&	m+4	\\\hline
\end{array}
  \and \hspace{20pt}
\begin{array}{|c|c|c|}
       \multicolumn{3}{c}{W }\\        \hline
	   	j=k	&	j=k+1	&	j=k+2	\\\hline
	&		    &	a_{1,5k+1}	\\\hline
	{\color{red}{a_{1,5k}}}	&	        &	a_{2,5k+1}	\\\hline
	a_{2,5k}&	     	&	\\\hline
\end{array}
  \and \hspace{20pt}
\begin{array}{|c|c|c|}
        \multicolumn{3}{c}{W^* }\\        \hline
	j=k	&	j=k+1	&	j=k+2	\\\hline
a_{1,5k+1}		&		    &		\\\hline
{\color{red}{a_{2,5k+1}}}	&	        &		a_{1,5k}	\\\hline
	&	     	&a_{2,5k}	\\\hline
\end{array}$}
         \end{center}

\begin{center}
 {\renewcommand\arraystretch{0.7}
\setlength{\arraycolsep}{0.8pt}
\footnotesize
        $\begin{array}{|c|c|c|c|}
  \multicolumn{4}{c}{B }\\        \hline
	&	j=n+1-k-2	&	j=n+1-k-1	&	j=n+1-k	\\\hline
i=n+1-k-1	&	m-2	&	m-1	&	m	\\\hline
i=n+1-k	&	\emph{\textbf{m}}	&	m+1	&	m+2	\\\hline
i=n+1-k+1	&	m+2	&	m+3	&	m+4	\\\hline
\end{array}
  \and \hspace{20pt}
\begin{array}{|c|c|c|}
       \multicolumn{3}{c}{W }\\        \hline
	   	j=n+1-k-2	&	j=n+1-k-1	&	j=n+1-k	\\\hline
	&		    &	a_{1,k+1}	\\\hline
	a_{1,k}	&	        &	{\color{red}{a_{2,k+1}}}	\\\hline
	a_{2,k}&	     	&	\\\hline
\end{array}$}
         \end{center}

\begin{center}
 {\renewcommand\arraystretch{0.8}
\setlength{\arraycolsep}{2pt}
\footnotesize
        $\begin{array}{|c|c|c|c|}
        \multicolumn{4}{c}{W^* }\\        \hline
&		j=n+1-k-2	&	j=n+1-k-1	&	j=n+1-k	\\\hline
i=n+1-k-1& a_{1,k+1}		&		    &		\\\hline
i=n+1-k	& a_{2,k+1}	&	        &		{\color{red}{a_{1,k}}}	\\\hline
i=n+1-k+1	&	&	     	&a_{2,k}	\\\hline
\end{array}$}
         \end{center}
Hence $$w_{k,k}^*+w_{n+1-k,n+1-k}^*=w_{k,k+2}+w_{n+1-k,n+1-k-2}=a_{2,5k+1}+a_{1,k}=(5k+1-1)+(n+k)= 2n+1.$$
The set of row-sums, column-sums and the right diagonal-sum of $W^*$
is the same as that of $W$.
Then $W^*$ is a regular SAMS$(n,2)$.
\end{proof}
\noindent\textbf{Remark 1} \ \ For any array $C=(c_{i,j})_{n\times n}$, let $\Omega(C)=\{(i,j)|c_{i,j}\neq0,\ i,j\in I_n\}$ and the notation is used in the rest of the paper.  In the proof of Theorem \ref{SAMS(n,2)},
we have
\begin{center}
$\Omega(W)=\{(i,j)|b_{i,j}\in\{m,m+2\},\ i,j\in I_n\}$,
\end{center}
and
\begin{center}
$\Omega(W^*)\subset\{(i,j)|b_{i,j}\in\{m-2,m,m+2,m+4\},\ i,j\in I_n\}$
in \textbf{Case 2},
\end{center}
 which can be used in the proof later.

\vskip 4pt
To illustrate the proof of Theorem \ref{SAMS(n,2)}, we give an example in the following.
\begin{exam}\label{Ex1}
There exists a regular SAMS$(7,2)$.
\end{exam}
\begin{proof}
By the proof of Theorem \ref{SAMS(n,2)}, take $n=7$, then $m=3$ and $m+2=5$.

\begin{center}
$A=\renewcommand\arraystretch{0.8}\left(
        \begin{smallmatrix}
 8	&	9	&	11	&	12	&	13	&	14	&	7	\\
1	&	2	&	3	&	10	&	4	&	5	&	6	\\
        \end{smallmatrix}
        \right),$\ \ \
  $B=\renewcommand\arraystretch{0.8}\left(
        \begin{smallmatrix}
2	&	\textbf{3}	&	4	&	\textbf{5}	&	6	&	7	&	1	\\
4	&	\textbf{5}	&	6	&	7	&	1	&	2	&	\textbf{3}	\\
6	&	7	&	1	&	2	&	\textbf{3}	&	4	&	\textbf{5}	\\
1	&	2	&	\textbf{3}	&	4	&	\textbf{5}	&	6	&	7	\\
\textbf{3}	&	4	&	\textbf{5}	&	6	&	7	&	1	&	2	\\
\textbf{5}	&	6	&	7	&	1	&	2	&	\textbf{3}	&	4	\\
7	&	1	&	2	&	\textbf{3}	&	4	&	\textbf{5}	&	6	\\
        \end{smallmatrix}
        \right).$
\end{center}
It is readily checked that $g(1)=\langle m+2-1\rangle_n=m+1=4$,  $f(3,4)=7$ and $f(5,4)=1$ since $b_{7,4}=3$
and $b_{1,4}=5$, then $w_{7,4}=a_{1,1}=8$ and $w_{1,4}=a_{2,1}=1$, and so on.
The array $W$ is obtained in the following,
\begin{center}
 {\renewcommand\arraystretch{0.8}
\setlength{\arraycolsep}{3pt}
\footnotesize
\hspace{15pt}
$W=\begin{array}{|c|c|c|c|c|c|c|}\hline
&	7	&		&	1	&		&		&		\\\hline
	&	6	&		&		&		&		&	14	\\\hline
&		&		&		&	13	&		&	5	\\\hline
	&		&	12	&		&	4	&		&		\\\hline
11	&		&	10	&		&		&		&		\\\hline
3	&		&		&		&		&	9	&		\\\hline
	&		&		&	8	&		&	2	&		\\\hline
\end{array}$}\ ,
\end{center}
\noindent where empty entries of $W$ indicate 0. Clearly, $G(W)=[1,14]$ and there are two non-zero
entries in each row, each column and each main diagonal of $W$. On the other
hand, the set of row-sums $R(W)=\{8,20,18,16,21,12,10\}$, the set of column-sums $C(W)=\{14,13,22,9,17,11,19\}$,
$l(W)=15$ and $r(W)=23$, it follows that
$S_W=R(W)\cup C(W)\cup \{l(W),r(W)\}=[8,23]$. So, $W$ is a regular SAMS$(7,2)$.
\end{proof}

The following example is very similar to the above, so
we only list the arrays $A$, $B$, $W$ and $W^*$ by using the proof of Theorem \ref{SAMS(n,2)}.
\begin{exam}\label{Ex2}
There exists a regular SAMS$(11,2)$.
\end{exam}
\begin{proof}
We have $m=5$, $m+2=7$ and $k=2$.

\begin{center}
$A=\renewcommand\arraystretch{0.8}\left(
        \begin{smallmatrix}
12	&	13	&	14	&	15	&	17	&	18	&	19	&	20	&	21	&	22	&	11	\\
1	&	2	&	3	&	4	&	5	&	16	&	6	&	7	&	8	&	9	&	10	\\
        \end{smallmatrix}
        \right),$\ \ \
  $B=\renewcommand\arraystretch{0.8}\left(
        \begin{smallmatrix}
2	&	3	&	4	&	\textbf{5}	&	6	&	\textbf{7}	&	8&9&10&11&1	\\
4	&	\textbf{5}	&	6	&	\textbf{7}	&	8	&	9	&	10&11&1&2&3	\\
6	&	\textbf{7}	&	8	&	9	&	10	&	11	&	1&2&3&4&\textbf{5}	\\
8	&	9	&	10	&	11	&	1	&	2	&	3&4&\textbf{5}&6&\textbf{7}	\\
10	&	11	&	1	&	2	&	3	&	4	&	\textbf{5}&6&\textbf{7}&8&9	\\
1	&	2	&	3	&	4	&	\textbf{5}	&	6	&	\textbf{7}&8	&	9	&	10	&	11	\\
3	&	4	&	\textbf{5}	&	6	&	\textbf{7}	&	8	&	9	&	10	&	11&1	&	2	\\
\textbf{5}	&	6	&	\textbf{7}	&	8	&	9	&	10	&	11&1	&	2	&	3	&	4	\\
\textbf{7}	&	8	&9	&	10	&	11	&	1&	2	&	3	&	4	&	\textbf{5}	&	6	\\
9	&		10	&11&1&	2	&	3	&	4	&	\textbf{5}&6	&	\textbf{7}	&	8	\\
11	&	1	&	2	&	3	&	4	&	\textbf{5}&6	&	\textbf{7}	&	8	&	9	&	10	\\
        \end{smallmatrix}
        \right),$
\end{center}

\begin{center}
 {\renewcommand\arraystretch{0.8}
\setlength{\arraycolsep}{3pt}
\footnotesize
\hspace{15pt}
$W=\begin{array}{|c|c|c|c|c|c|c|c|c|c|c|}\hline
	&		&		&	11	&		&	1	&		&		&		&		&		\\\hline
	&	22	&		&	10	&		&		&		&		&		&		&		\\\hline
	&	9	&		&		&		&		&		&		&		&		&	21	\\\hline
	&		&		&		&		&		&		&		&	20	&		&	8	\\\hline
	&		&		&		&		&		&	19	&		&	7	&		&		\\\hline
	&		&		&		&	18	&		&	6	&		&		&		&		\\\hline
	&		&	17	&		&	16	&		&		&		&		&		&		\\\hline
15	&		&	5	&		&		&		&		&		&		&		&		\\\hline
4	&		&		&		&		&		&		&		&		&	14	&		\\\hline
	&		&		&		&		&		&		&	13	&		&	3	&		\\\hline
	&		&		&		&		&	12	&		&	2	&		&		&		\\\hline
\end{array}$}\ .
\end{center}
We exchange column $2$ with column $4$ and  column $10$ with column $8$ of $W$ to obtain $W^*$ as follows.
\begin{center}
 {\renewcommand\arraystretch{0.8}
\setlength{\arraycolsep}{3pt}
\footnotesize
\hspace{15pt}
$W^*=\begin{array}{|c|c|c|c|c|c|c|c|c|c|c|}\hline
	&	11	&		&		&		&	1	&		&		&		&		&		\\\hline
	&	10	&		&	22	&		&		&		&		&		&		&		\\\hline
	&		&		&	9	&		&		&		&		&		&		&	21	\\\hline
	&		&		&		&		&		&		&		&	20	&		&	8	\\\hline
	&		&		&		&		&		&	19	&		&	7	&		&		\\\hline
	&		&		&		&	18	&		&	6	&		&		&		&		\\\hline
	&		&	17	&		&	16	&		&		&		&		&		&		\\\hline
15	&		&	5	&		&		&		&		&		&		&		&		\\\hline
4	&		&		&		&		&		&		&	14	&		&		&		\\\hline
	&		&		&		&		&		&		&	3	&		&	13	&		\\\hline
	&		&		&		&		&	12	&		&		&		&	2	&		\\\hline
\end{array}$}\ .
\end{center}
\noindent Here empty entries of $W$ and $W^*$ indicate 0. It is easy to see that $W^*$ is a regular SAMS$(11,2)$.
\end{proof}

\section{Symmetric diagonal Kotzig array and symmetric forward diagonals array }

In this section, we introduce a symmetric diagonal Kotzig array and symmetric forward diagonal array
which are the important building blocks in our construction next section.

\begin{defi}\label{symdiaKA}
Suppose $n$ and $d$ are positive integers with $d \leq n$. A $d\times n$ rectangular array $A=(a_{i,j})$, $i\in I_d$,
$j\in I_n$, is a \emph{symmetric diagonal Kotzig array} if it has the following properties:

1. Each row is a permutation of the set $I_n=\{1,2,\cdots, n\}$.

2. All columns have the same sum.

3. All forward diagonals have the same sum.

4. $a_{i,j}+a_{d+1-i,n+1-j}=n+1$  for each $(i,j)\in I_d\times I_n$.
\end{defi}

Three-row arrays satisfying the first two conditions of the Definition \ref{symdiaKA} were used by A. Kotzig
( \cite{Kotzig}) to construct edge-magic labelings and there is an account of this in \cite{Wallis,Wallis1}
where they are called \emph{Kotzig arrays}. I. Gray and J. MacDougall have constructed a $d$-row generalization of these Kotzig arrays and they have been used to construct vertex-magic labelings for complete bipartite graphs ( \cite{Gray1}).
The arrays satisfying the first three conditions of the  Definition \ref{symdiaKA} were used by I. Gray and J. MacDougall (\cite{Gray}) to construct sparse semi-magic square and vertex-magic labelings, and they are called \emph{diagonal Kotzig arrays}. Our constructions of squares require the diagonal Kotzig arrays with the additional diagonal condition stated
as property 4 above.

\begin{defi}
Suppose $n$ and $t$ are positive integers and $t \leq n$. A $t\times n$ array
$A=(a_{i,j})$, $i\in I_t, j\in I_n$, is a \emph{symmetric forward diagonals array}, denoted by SFD$(t,n)$ for short, if it satisfies
the following properties:

1. The elements set of $A$ consists of $nt$ consecutive positive integers.

2. All columns have the same sum.

3. All forward diagonals have the same sum.

4. $a_{i,j}+a_{t+1-i,n+1-j}$ is a constant for any $(i,j)\in I_t\times I_n$.

\end{defi}

If $A_1=(a_{i,j}^{(1)})$ is an SFD$(t,n)$ over $I_{nt}$, let
$a_{i,j}^{(2)}=a_{i,j}^{(1)}+l$, where $l$ is a nonnegative integer,
then $A_2=(a_{i,j}^{(2)})$ is also an SFD$(t,n)$ over $[1+l,nt+l]$.

\begin{con}\label{SymDK-SFD}
If there exists a symmetric diagonal Kotzig array of order $d\times n$,
 then there exists an SFD$(d,n)$.
\end{con}
\begin{proof}
Let $A=(a_{i,j})$ be  a symmetric diagonal Kotzig array of order $d\times n$ and
$B=(b_{i,j})$ be the $d\times n$ array with $b_{i,j}=i-1$,
where $i\in I_d$, $n\in I_n$. Next we shall show that
$S=A+nB=(s_{i,j})$ is an SFD$(d,n)$.

Clearly, $\bigcup\limits_{i=1}^{d}\bigcup\limits_{j=1}^{n}\{s_{i,j}\}=I_{dn}$.
Note that the columns of $A$ and $B$ have constant sum respectively and therefore the columns
of $S = A + nB$ will also have a constant sum $k$. Also the forward diagonals
of $A$ and $B$ have constant sum respectively and so the forward diagonals of $S$ will also
have constant sum, also equal to $k$. Since $a_{i,j}+a_{d+1-i,n+1-j}$ is a constant, then
$$s_{i,j}+s_{d+1-i,n+1-j}=(a_{i,j}+nb_{i,j})+(a_{d+1-i,n+1-j}+nb_{d+1-i,n+1-j})=a_{i,j}+a_{d+1-i,n+1-j}+n(n+1)$$
is a constant. Hence $S$ is an SFD$(d,n)$.
\end{proof}

So in order to show the existence of a SFD$(d,n)$, we only show how
to construct a symmetric diagonal Kotzig array.
The following theorem is obtained by using direct
construction and the recurrence method.
\begin{thm}\label{SymDKA}
There exists a symmetric diagonal Kotzig array of order $d\times n$ for any odd
integer $n\geq 3$ and integer $d\in [3,n]$.
\end{thm}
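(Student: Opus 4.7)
Plan. The theorem is to be proved by an induction on $d$, seeded by a small number of directly constructed base cases. Throughout, the main building block is the modular permutation $j\mapsto\langle\alpha j+\beta\rangle_n$, which is a bijection on $I_n$ whenever $\gcd(\alpha,n)=1$; since $n$ is odd, most small integers $\alpha$ qualify for use as a slope, and in particular $2$ is invertible modulo $n$, which will be crucial.

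For the base cases I would try an ansatz of the form $a_{i,j}=\langle\alpha_i j+\beta_i\rangle_n$ with a palindromic slope vector $(\alpha_1,\ldots,\alpha_d)$ satisfying $\sum_i\alpha_i\equiv 0\pmod n$ and each $\gcd(\alpha_i,n)=1$ (for instance $(1,-2,1)$ for $d=3$ and an analogous choice for $d=4$). Linearity then makes every column sum and every forward-diagonal sum constant modulo $n$, while the offsets $\beta_i$ are chosen so that $\beta_i+\beta_{d+1-i}$ satisfies the congruence forced by condition~4. The delicate step, and I expect the main obstacle inside the direct part, is to upgrade ``constant modulo $n$'' to ``constant as an integer'': the reduction $\langle\cdot\rangle_n$ adds an extra $n$ whenever the pre-reduced value lies outside $[1,n]$, so each column sum and each diagonal sum is the common mod-$n$ value plus $n$ times a wrap-count that must be shown independent of the column index or diagonal index. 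The palindromic structure of the slope vector is what pairs the wraps off against one another inside each column and each forward diagonal, forcing the counts to come out equal.

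For the inductive step I would enlarge a symmetric diagonal Kotzig array of order $d\times n$ to one of order $(d+4)\times n$ by adding two new rows at the top and two more at the bottom. A naive complementary top/bottom pair cannot preserve constant column sums, because $r(j)+(n+1)-r(n+1-j)$ is never independent of $j$ for a nontrivial permutation $r$; however, if the two added top rows are taken as $r_1$ and $r_2=r_1\circ\mathrm{rev}$ and the two bottom rows are their centrosymmetric images $j\mapsto(n+1)-r_1(n+1-j)$ and $j\mapsto(n+1)-r_1(j)$, then the joint column contribution of all four added rows telescopes to the constant $2(n+1)$, and condition~4 descends to the enlarged array automatically. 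The remaining technical obstacle is to control the joint forward-diagonal contribution, since the four new rows intersect each diagonal at four shifted columns whose wrap-around behavior depends on the diagonal index; I would handle this by choosing $r_1$ itself of the modular shape $\langle\alpha j+\beta\rangle_n$ used in the base cases and running the same wrap-balancing pairing argument. With one base case for each residue of $d$ modulo $4$ (for example $d\in\{3,4,5,6\}$) and this $d\mapsto d+4$ recurrence, induction then delivers symmetric diagonal Kotzig arrays of order $d\times n$ for every $d\in[3,n]$.
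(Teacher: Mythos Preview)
Your high-level plan coincides with the paper's: explicit base arrays for $d\in\{3,4,5,6\}$ followed by a $d\mapsto d+4$ recursion that sandwiches the existing array between a two-row block on top and its centrosymmetric image on the bottom. Where your proposal diverges, however, is in the choice of the two top rows, and this is where a real gap appears.

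You take the top block to be $(r_1,\,r_1\circ\mathrm{rev})$. As you note, the four new rows then contribute $r_1(j)+r_1(n+1-j)+(n+1-r_1(j))+(n+1-r_1(n+1-j))=2(n+1)$ to every column, so condition~2 survives. But along a forward diagonal the four new entries sit in columns $j,\,j+1,\,j+d+2,\,j+d+3$, and their contribution is
\[
r_1(j)+r_1(n-j)\;-\;r_1(\langle j+d+2\rangle_n)\;-\;r_1(\langle n-j-d-2\rangle_n)\;+\;2(n+1).
\]
With $r_1(j)=\langle\alpha j+\beta\rangle_n$ this is $2(n+1)$ modulo $n$, but the function $g(j)=r_1(j)+r_1(n-j)$ is \emph{not} constant as an integer (it takes two values differing by $n$ for any nontrivial linear $r_1$), and there is no pairing among these four shifted positions that cancels the wraps. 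The ``same wrap-balancing pairing argument'' you invoke is precisely the step you already flagged as unresolved in the base case, so the induction is resting on an unproved lemma.

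The paper sidesteps this entirely. Its two top rows $b_1,b_2$ are chosen with the \emph{shift-complement} relation $b_{1,j}+b_{2,\langle j+1\rangle_n}=n+1$ (equivalently $b_2(j)=n+1-b_1(\langle j-1\rangle_n)$), so that the top block alone contributes \emph{exactly} $n+1$ to every forward diagonal, with no wrap-count issue at all; the bottom block $b_3,b_4$ (the centrosymmetric image) does the same. Constant column sums for the four rows together then follow because $(b_1,b_2,b_3,b_4)$ is itself the $d=4$ base array $A_4$. Thus the paper's recursion needs no modular bookkeeping, whereas your reversal-based recursion would require exactly the delicate wrap analysis you have not carried out.
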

\begin{proof}
For $i\in I_3$, $j\in I_n$, let $A_3=(a_{i,j})$,  where

\begin{center}
$    a_{1,j}= \left\{
          \begin{array}{ll}
              n-\frac{j-1}{2},     &  if \ \ \ j\ \  is \ \ odd, \\
               \frac{n+1-j}{2},  & if \ \ \ j \ \   is\ \ even,\\
          \end{array}
       \right. \ \ \
       a_{2,j}= j,\ \ \   a_{3,j}= n+1-a_{1,n+1-j}.$
\end{center}

For $i\in I_4$, $j\in I_n$, let $A_4=(b_{i,j})=\left(
                                               \begin{array}{c}
                                                 B_1 \\
                                              B_2 \\
                                               \end{array}
                                             \right)
$,  where

\begin{center}
$ b_{1,j}= \left\{
          \begin{array}{ll}
              j,     & if \ \ \ j\leq {n-1\over 2}, \\
               j+1,  & if \ \ \ {n+1\over 2 }\leq j\leq n-1,\\
     {n+1\over 2},   & if \ \ \  j=n, \\
          \end{array}
       \right. \ \ \
    b_{2,j}= \left\{
          \begin{array}{ll}
               {n+1\over 2},   & if \ \ \  j=1, \\
               n+2-j ,    & if \ \ \ 2\leq  j\leq {n+1\over 2}, \\
               n+1-j,    & if \ \ \  j> {n+1\over 2},\\
          \end{array}
       \right.$
\end{center}

$$b_{3,j}=n+1-b_{2,n+1-j},\ \ \ \ \ b_{4,j}=n+1-b_{1,n+1-j}.$$
For $i\in I_5$, $j\in I_n$, let $A_5=(c_{i,j})$,  where

$$c_{1,j}=[{n+1\over 2}(j-1)](\emph{mod}\ \ n)+1,\ \ \ c_{2,j}=n+1-j,\ \ \ c_{3,j}=j,$$

$$c_{4,j}=n+1-j,\ \ \ c_{5,j}=j+{n+1\over 2}-c_{1,j}.$$
It is readily checked that $A_3$, $A_4$, $A_5$ and $A_6=\left(\begin{array}{c}
                      A_3 \\
                      A_3 \\
                    \end{array}
                  \right)$ are the symmetric diagonal Kotzig arrays of order $d\times n$ for $d=3,4,5,6$ respectively.

For $d\geq 7$, it can be written as $d=4k+\alpha$, where $\alpha\in\{3,4,5,6\}$. Let
\begin{center}
$E=\left(
        \begin{smallmatrix}
B_1 \\
      \vdots\\
       B_1 \\
       A_{\alpha} \\
       B_2 \\
      \vdots \\
       B_2\\
 \end{smallmatrix}
        \right),$
\end{center}
where $B_i$ occurs $k$ times for $i=1,2$.
It is clear that $E$ is a symmetric diagonal Kotzig array of order $(4k+\alpha)\times n$.
\end{proof}

\noindent\textbf{Remark 2}\ \ \
(i)\ \ It is to be pointed out that the array $A_4$ also has the property that for any $j\in I_n$,
\begin{center}
$b_{1,j}+b_{2,\langle j+1\rangle_n}=n+1$,
\end{center}
so does
\begin{center}
$b_{3,j}+b_{4,\langle j+1\rangle_n}=n+1.$
\end{center}

(ii)\ \ There are many ways to obtain a symmetric diagonal Kotzig array of
order $d\times n$ with $d\geq 7$, here we also give another different combined way below. Let
\begin{center}
$F=\left(
        \begin{smallmatrix}
 A_3\\
 B_1 \\
      \vdots\\
       B_1 \\
       A_{\alpha} \\
       B_2 \\
      \vdots \\
       B_2\\
        A_3\\
 \end{smallmatrix}
        \right),$
\end{center}
where $B_i$ occurs $k-1$ times for $i=1,2$.
Then it is easy to check that $F$ is also a symmetric diagonal Kotzig array of order $(4k+2+\alpha)\times n$.

(iii)\ \ When $d=2e$ and $e\geq 3$, then we can get a
symmetric diagonal Kotzig array of order $d\times n$ by joining two symmetric diagonal Kotzig arrays of order $e\times n$ coming from Theorem \ref{SFD}, which will be used in the  proof of the following conclusions when the
number of the rows of a  symmetric diagonal Kotzig array is even $d\geq 6$.
\qed
\vskip 12pt

Combine Construction \ref{SymDK-SFD} and Theorem \ref{SymDKA}, we have the following theorem.
\begin{thm}\label{SFD}
For any odd $n\geq 3$ and $t\in [3,n]$, there exists an SFD$(t,n)$ over $[1+l,nt+l]$ for any nonnegative integer $l$.
\end{thm}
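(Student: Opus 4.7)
The plan is to chain together the two results proved just above and then apply the elementary shift observation recorded in the paragraph preceding Construction \ref{SymDK-SFD}. First, given any odd $n\geq 3$ and any $t\in[3,n]$, Theorem \ref{SymDKA} supplies a symmetric diagonal Kotzig array of order $t\times n$ (obtained either from one of the explicit templates $A_3,A_4,A_5,A_6$ or from the vertical concatenation $(B_1,\dots,B_1,A_\alpha,B_2,\dots,B_2)$ when $t=4k+\alpha$ with $\alpha\in\{3,4,5,6\}$, or from the alternative arrangement $F$ indicated in Remark 2(iii) when $t$ is even and $\geq 6$).

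Next, feeding this symmetric diagonal Kotzig array into Construction \ref{SymDK-SFD} produces a $t\times n$ array $S=A+nB$ that is already an SFD$(t,n)$ whose entries constitute exactly the set $I_{nt}=\{1,2,\dots,nt\}$. Thus the case $l=0$ of the theorem is immediate from the two previous results.

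To handle a general nonnegative integer $l$, I would simply add the constant $l$ to every entry of $S$, obtaining the array $S'=(s_{i,j}+l)$. This is precisely the shift operation already noted in the paper between the Definition of SFD and Construction \ref{SymDK-SFD}, and a brief verification shows that each defining property of an SFD is preserved: the entry set becomes the $nt$ consecutive integers $\{1+l,\dots,nt+l\}$; each column sum and each forward diagonal sum (both involving $t$ entries) increases by the same amount $tl$, so they remain constant and still equal; and the reflective quantity $s'_{i,j}+s'_{t+1-i,n+1-j}$ equals the old constant plus $2l$, hence is again a constant. Therefore $S'$ is an SFD$(t,n)$ over $[1+l,nt+l]$.

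There is no serious obstacle to overcome here because essentially all the combinatorial work has already been done in Theorems \ref{SymDKA} and in Construction \ref{SymDK-SFD}; the only substantive step left is to observe that the three conditions defining an SFD are invariant under a uniform translation, which is what allows the parameter $l$ to be absorbed. Consequently the proof will be short, amounting to stringing the two cited results together and invoking the translation remark.
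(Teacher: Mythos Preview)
Your proposal is correct and follows exactly the paper's approach: the paper's proof is simply the one-line statement that combining Construction \ref{SymDK-SFD} with Theorem \ref{SymDKA} yields the result, together with the translation remark recorded just after the definition of an SFD. Your write-up spells out the details of this combination and the shift by $l$, but the strategy is identical.
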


\noindent\textbf{Remark 3} \ \
By (i) and (iii) of Remark 2, for $e\geq 2$ and any nonnegative integer $l$, there exists an
SFD$(2e,n)$, $F=(f_{i,j})$, over $[1+l,2en+l]$
by using Construction \ref{SymDK-SFD} and Theorem \ref{SymDKA}, and it has an additional properties:
\begin{center}
$f_{i,\langle i+x\rangle_n}+f_{2e+1-i,n+1-\langle i+x\rangle_n}=2en+1+2l$,
\end{center}
and
\begin{center}
$\sum\limits_{i=1}^ef_{i,\langle i+x\rangle_n}+\sum\limits_{i=e+1}^{2e}f_{i,\langle i+x+y\rangle_n}=(2en+1+2l)e$
\end{center}
for any $x,y\in I_n$.

\section{The existence of a regular SAMS$(n,d)$ for $n\equiv1,5 \pmod 6$ and $d\in [6,n-3]$}

In this section, we shall prove that there exists a regular SAMS$(n,d)$ for any $n\equiv1,5 \pmod 6$ and $d\in [6,n-3]$
by using the arrays $B$, $W$ and $W^*$ in the proof of Theorem \ref{SAMS(n,2)} and the existence of an SFD$(d,n)$
from Theorem \ref{SFD} which constructed by Construction \ref{SymDK-SFD} and Theorem \ref{SymDKA}.

To do this, we also introduce a new concept and some very simple and useful results in the following.

\begin{defi}\label{compatibleDY}
Two $m\times n$ arrays $M=(m_{i,j})$ and $N=(n_{i,j})$ are \emph{compatible} if $\Omega(M)\cap\Omega(N)=\emptyset$,
where $\Omega(M)=\{(i,j)|m_{i,j}\neq0,\ i\in I_m, j\in I_n\}$ and $\Omega(N)=\{(i,j)|n_{i,j}\neq0,\ i\in I_m, j\in I_n\}$.
\end{defi}

\begin{lem}\label{compat-R1}
If there exists a regular SMS$(n,d_1)$ and an SAMS$(n,d_2)$, and they are compatible,
then there exists an SAMS$(n,d_1+d_2)$.
\end{lem}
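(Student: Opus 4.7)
The plan is to build the desired SAMS$(n,d_1+d_2)$ as a value-shifted superposition of the two given arrays, exploiting compatibility to guarantee that there are no positional conflicts. Concretely, let $M$ denote the regular SMS$(n,d_1)$ with magic constant $\sigma$ and let $N$ denote the SAMS$(n,d_2)$. I would first produce an auxiliary array $M'$ by replacing every nonzero entry $m_{i,j}$ of $M$ with $m_{i,j}+nd_2$ and leaving zeros as zeros. Then I would define the candidate $A:=M'+N$ entry-wise.

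The verification proceeds in three short steps. First, I would check the value set: the nonzero entries of $N$ are a permutation of $[1,nd_2]$ and the nonzero entries of $M'$ are a permutation of $[nd_2+1,\,n(d_1+d_2)]$; since compatibility gives $\Omega(M')\cap\Omega(N)=\emptyset$, every cell of $A$ receives at most one nonzero value, and together the nonzero entries of $A$ realize each integer in $[1,n(d_1+d_2)]$ exactly once. Second, I would compute line sums of $M'$: because $M$ is regular, each row, column, and main diagonal of $M$ contains exactly $d_1$ nonzero cells, so the shift contributes the same amount $d_1\cdot nd_2$ to every such line, and all $2n+2$ line sums of $M'$ equal the common constant $c:=\sigma+d_1 nd_2$. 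Third, because line sums add over compatible overlays, $S_A=\{\,c+s:s\in S_N\,\}$, which is simply a translate of $S_N$; since $N$ is an SAMS$(n,d_2)$, the set $S_N$ consists of $2n+2$ consecutive integers, and therefore so does $S_A$.

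There is no substantial obstacle here; the lemma is essentially a bookkeeping result. The only place where care is needed is the use of regularity of $M$: without the assumption that every row, column, and both main diagonals of $M$ contain the same number $d_1$ of nonzero entries, the shift $m_{i,j}\mapsto m_{i,j}+nd_2$ would add different amounts to different lines and destroy the uniformity of line sums of $M'$, after which $S_A$ would no longer be a clean translate of $S_N$. This is precisely why the hypothesis requires a regular SMS but only an ordinary (possibly non-regular) SAMS on the other side, and why the conclusion is an SAMS rather than a regular SAMS.
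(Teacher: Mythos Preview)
Your proof is correct and follows the same construction as the paper: shift the nonzero entries of the regular SMS by $nd_2$ to obtain $M'$, then take $A=M'+N$. The paper's proof is terser (it simply asserts that $M'+N$ is an SAMS$(n,d_1+d_2)$), whereas you spell out explicitly why regularity of $M$ is needed for the shifted array $M'$ to retain constant line sums and hence for $S_A$ to be a translate of $S_N$; this added explanation is entirely in line with the paper's intent.
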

\begin{proof}
Let $M=(m_{i,j})$ be a regular SMS$(n,d_1)$ over $\{0,1,2,\cdots,nd_1\}$, and $N=(n_{i,j})$ be an SAMS$(n,d_2)$ over $\{0,1,2,\cdots,nd_2\}$.
Let $M'=(m_{i,j}')$, where
\begin{center}
$m_{i,j}'= \left\{
          \begin{array}{ll}
             m_{i,j}+nd_2,     &  if \ \ \  m_{i,j}\neq0, \\
              0,  & if \ \ \ m_{i,j}=0.\\
          \end{array}
       \right. $
\end{center}
It is readily checked that $M'+N$ is an SAMS$(n,d_1+d_2)$ over $\{0,1,2,\cdots,n(d_1+d_2)\}$.
\end{proof}


\begin{lem}\label{compat-R2}
If there exists an SMS$(n,d_1)$ and a regular SAMS$(n,d_2)$, and they are compatible,
then there exists an SAMS$(n,d_1+d_2)$.
\end{lem}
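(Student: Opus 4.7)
The plan is to mirror the proof of Lemma \ref{compat-R1} after swapping the roles of the two arrays. There, the regular array was the SMS, which was shifted so that its contribution to every row/column/diagonal sum became a common constant; here regularity instead lives with the SAMS, so I would shift the \emph{SAMS} up by $nd_1$ and then superpose it on the SMS.

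Concretely, let $M=(m_{i,j})$ be the SMS$(n,d_1)$ over $\{0,1,\ldots,nd_1\}$ with common row, column and diagonal sum $\sigma$, and let $N=(n_{i,j})$ be the regular SAMS$(n,d_2)$ over $\{0,1,\ldots,nd_2\}$. Define $N'=(n_{i,j}')$ by $n_{i,j}'=n_{i,j}+nd_1$ when $n_{i,j}\neq 0$ and $n_{i,j}'=0$ otherwise, and put $T=M+N'$. Compatibility gives $\Omega(M)\cap\Omega(N)=\emptyset$, so at most one summand is non-zero in each cell; together with the disjoint value ranges $\{1,\ldots,nd_1\}$ and $\{nd_1+1,\ldots,n(d_1+d_2)\}$, this yields $G(T)=\{1,2,\ldots,n(d_1+d_2)\}$ with each value appearing exactly once, and exactly $d_1+d_2$ positive entries in every row, column and main diagonal of $T$.

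The two arithmetic checks are then short. First, because $N$ is regular, every row, every column and each of the two main diagonals of $N$ contains exactly $d_2$ positive entries, so the shift inflates each such sum by precisely $d_2\cdot n d_1$; hence $S_{N'}=\{s+d_2 n d_1 : s\in S_N\}$ is still a set of $2n+2$ consecutive integers. Second, the row-sums, column-sums and two main diagonal sums of $M$ are all equal to $\sigma$, so adding $M$ translates every element of $S_{N'}$ by the same constant $\sigma$. Therefore $S_T$ is again a set of $2n+2$ consecutive integers, and $T$ is the desired SAMS$(n,d_1+d_2)$.

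There is no genuine obstacle here; the argument is a direct structural dual of Lemma \ref{compat-R1}. The only point requiring care is that regularity must be invoked on the SAMS (not the SMS) when one justifies the uniform shift: shifting the non-regular SMS instead would introduce a variable increment $k_i\cdot nd_2$, where $k_i$ counts the positive entries in the given row, column or diagonal of $M$, and would therefore destroy, rather than preserve, the arithmetic-progression structure of the resulting sum set.
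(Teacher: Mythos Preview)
Your proof is correct and follows exactly the paper's approach: shift the regular SAMS $N$ up by $nd_1$ to obtain $N'$, and set $T=M+N'$. One small overclaim worth noting: since $M$ is not assumed regular, $T$ need not have exactly $d_1+d_2$ positive entries in every row, column and diagonal, but this is harmless because the lemma only asserts that $T$ is an SAMS, not a regular one.
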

\begin{proof}
Let $M=(m_{i,j})$ be an SMS$(n,d_1)$ over $\{0,1,2,\cdots,nd_1\}$, and $N=(n_{i,j})$ be a regular SAMS$(n,d_2)$ over $\{0,1,2,\cdots,nd_2\}$.
Let $N'=(n_{i,j}')$, where
\begin{center}
$n_{i,j}'= \left\{
          \begin{array}{ll}
             n_{i,j}+nd_1,     &  if \ \ \  n_{i,j}\neq0, \\
              0,  & if \ \ \ n_{i,j}=0.\\
          \end{array}
       \right. $
\end{center}
It is readily checked that $M+N'$ is an SAMS$(n,d_1+d_2)$ over $\{0,1,2,\cdots,n(d_1+d_2)\}$.
\end{proof}

\begin{thm}\label{compat-R3}
If there exists a regular SMS$(n,d_1)$ and a regular SAMS$(n,d_2)$, and they are compatible,
then there exists a regular SAMS$(n,d_1+d_2)$.
\end{thm}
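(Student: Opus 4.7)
The plan is to adapt the construction used in Lemmas \ref{compat-R1} and \ref{compat-R2}, exploiting the stronger hypothesis that both arrays are regular to preserve regularity in the combined array. Let $M=(m_{i,j})$ be a regular SMS$(n,d_1)$ over $\{0,1,\ldots,nd_1\}$ with common line-sum $\sigma$, and let $N=(n_{i,j})$ be a regular SAMS$(n,d_2)$ over $\{0,1,\ldots,nd_2\}$. As in Lemma \ref{compat-R1}, I would define $M'=(m_{i,j}')$ by $m_{i,j}'=m_{i,j}+nd_2$ if $m_{i,j}\neq 0$ and $m_{i,j}'=0$ otherwise, so that the positive entries of $M'$ form the set $\{nd_2+1,\ldots,n(d_1+d_2)\}$, each appearing exactly once. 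Compatibility gives $\Omega(M')\cap\Omega(N)=\Omega(M)\cap\Omega(N)=\emptyset$, hence $T:=M'+N$ is an $n\times n$ array whose nonzero entries realize $\{1,2,\ldots,n(d_1+d_2)\}$ exactly once.

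Next I would verify that $T$ is regular and has the required sum-set structure. Regularity is immediate from disjoint supports: for each row (respectively column, main diagonal), $M$ contributes $d_1$ positive cells and $N$ contributes $d_2$ positive cells, occupying disjoint positions, so $T$ has exactly $d_1+d_2$ positive entries in that row (column, diagonal). For the sum-set, observe that because every row, column, and main diagonal of $M$ contains precisely $d_1$ positive entries, all of which are shifted by $nd_2$ in forming $M'$, every row-sum, column-sum, and main-diagonal sum of $M'$ equals $\sigma+d_1 n d_2$. Consequently
\[
S_T=\bigl\{\sigma+d_1 n d_2+s:s\in S_N\bigr\},
\]
a translate of $S_N$. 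Since $S_N$ is a set of $2n+2$ consecutive integers, so is $S_T$, and therefore $T$ is a regular SAMS$(n,d_1+d_2)$.

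There is no real obstacle here; the argument is almost mechanical once the shift is chosen correctly. The only subtle point — and the reason the hypothesis that $M$ be \emph{regular} (and not merely an SMS) is essential — is that the uniform shift $+nd_2$ on the positive entries of $M$ contributes a global constant to each line-sum if and only if every row, column, and main diagonal of $M$ contains the same number of positive entries. Without this, the translation would distort the sums of $N$ non-uniformly and destroy the consecutive-integer property of $S_N$. This is precisely the feature that distinguishes Theorem \ref{compat-R3} from Lemmas \ref{compat-R1} and \ref{compat-R2}.
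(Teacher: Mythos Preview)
Your argument is correct and matches the paper's intended approach: the paper states Theorem~\ref{compat-R3} without proof, relying on the reader to combine the construction of Lemma~\ref{compat-R1} (or equivalently Lemma~\ref{compat-R2}) with the observation that regularity of both $M$ and $N$, together with disjoint supports, yields regularity of the sum. Your explicit verification of the constant shift $d_1nd_2$ on each line of $M'$, and your remark that this is exactly where regularity of $M$ is used, fill in precisely what the paper leaves implicit.
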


\begin{thm}\label{mainth4-1}
There exists a regular SAMS$(n,d)$ for any $n\equiv 1,5 \pmod 6$, $n\geq 11$ and $d\in [6,n-3]$.
\end{thm}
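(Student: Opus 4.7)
The strategy is to present the required regular SAMS$(n,d)$ as the superposition of the regular SAMS$(n,2)$ from Theorem~\ref{SAMS(n,2)} with a regular SMS$(n,d-2)$ compatible with it, and then invoke Theorem~\ref{compat-R3}. By Remark~1, the support of the SAMS$(n,2)$ $W$ (or $W^*$) lies in at most two (respectively four) transversals of the diagonal Latin square $B=(b_{i,j})$ with $b_{i,j}=\langle 2i+j-1\rangle_n$, namely those indexed by the ``forbidden'' values $\{m,m+2\}$ (or a subset of $\{m-2,m,m+2,m+4\}$), where $m=(n-1)/2$. It therefore suffices to construct a regular SMS$(n,d-2)$ whose support avoids these forbidden transversals.

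The SMS will be obtained by selecting indices $v_1,\dots,v_{d-2}\in I_n$ outside the forbidden set and placing an SFD$(d-2,n)$ $E=(e_{k,j})$ from Theorem~\ref{SFD} into the grid by laying its $k$-th row along the transversal $T_{v_k}=\{(i,j):b_{i,j}=v_k\}$; that is, $e_{k,j}$ is placed into the unique cell $(i,j)$ with $b_{i,j}=v_k$. To make row-sums constant, the $v_k$'s will be taken either as a single cyclically consecutive block centered at $(n+1)/2$ (for $d-2$ odd, i.e.\ $v_1=(n+2-(d-2))/2$) or as two cyclically consecutive blocks of length $e$ reflected through $(n+1)/2$ (for $d-2=2e$ even, with $v_{e+1}=n+2-v_1-e$, invoking the refined identity in Remark~3). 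In each case every grid-row picks up a forward diagonal of $E$ (or, in the even case, two such diagonals whose combined sum is controlled by Remark~3), so that row-sums and column-sums both equal the constant $(d-2)(n(d-2)+1)/2$.

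The critical step is verifying that both main diagonals of the embedded array also sum to $(d-2)(n(d-2)+1)/2$. Since $b_{i,n+1-i}=i$ and $b_{i,i}=\langle 3i-1\rangle_n$ is a bijection on $I_n$ (as $\gcd(n,6)=1$ when $n\equiv 1,5\pmod 6$), the two diagonal sums are sums along certain non-standard diagonals of $E$ (a backward diagonal and a step-$3^{-1}$ diagonal). However, the symmetric centering of the $v_k$'s about $(n+1)/2$ has been precisely chosen so that the involution $(k,j)\leftrightarrow(d-1-k,n+1-j)$ of Property~4 of the SFD, $e_{k,j}+e_{d-1-k,n+1-j}=C$, stabilizes both diagonal index-sets; pairing entries through this involution (and treating the unique fixed point carefully in the odd case) then collapses each diagonal sum to $(d-2)C/2$, which matches the common row/column sum. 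The main obstacle is reconciling this symmetric centering with avoidance of the forbidden set --- the centered single block meets $\{m,m+2\}$ when $d-2$ is odd, and for $n\equiv 5\pmod 6$ with $d$ near $n-3$ the centered two-block configuration meets $\{m-2,m+4\}$. These subcases will be handled by small local modifications of the block choice (e.g.\ using cells of $T_{m-2}$ and $T_{m+4}$ not lying in $\Omega(W^*)$), after which Theorem~\ref{compat-R3} completes the argument.
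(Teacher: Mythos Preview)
Your overall strategy---superpose the regular SAMS$(n,2)$ of Theorem~\ref{SAMS(n,2)} with a compatible regular SMS$(n,d-2)$ built by laying the rows of an SFD$(d-2,n)$ along transversals of the diagonal Latin square $B$, then invoke Theorem~\ref{compat-R3}---is exactly the paper's approach, and your diagonal argument via the involution $(i,j)\mapsto(n+1-i,n+1-j)$ together with Property~4 of the SFD is also the paper's argument. The difference, and the gap, is in the choice of transversal indices $v_1,\dots,v_{d-2}$.

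You take the $v_k$ \emph{consecutive} (step~$1$), centered at $m+1=(n+1)/2$, with the $E$-column equal to the grid column. As you correctly observe, for $d-2$ odd this block necessarily contains $m$ and $m+2$, so the support of your SMS meets $\Omega(W)$ on the \emph{entire} transversals $T_m$ and $T_{m+2}$---that is $2n$ cells, not a handful. Your proposed remedy, ``using cells of $T_{m-2}$ and $T_{m+4}$ not lying in $\Omega(W^*)$'', cannot repair this: the SMS structure (constant row and column sums) depends on using whole transversals, and once you drop or replace individual cells you lose the forward-diagonal-to-row correspondence that makes the row sums constant. So the odd case is not a boundary subcase to be patched; it is broken for every odd $d-2\ge 3$.

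The paper avoids this entirely by taking the $v_k$ in arithmetic progression of step~$2$ (namely $v_k=\langle 2k-2e-1+m\rangle_n$, with the central value $m+1$ omitted in the even case), and correspondingly reindexing the $E$-columns by the step-$2$ map $j\mapsto\langle 2j+m\rangle_n$. Since the forbidden values $\{m-2,m,m+2,m+4\}$ are all congruent to $m\pmod 2$, while the chosen $v_k$ are all congruent to $m+1\pmod 2$, the supports are automatically disjoint for every $t\le n-5$, with no case analysis and no local modification. The step-$2$ column map is what preserves the ``forward diagonals of $E$ become grid rows'' property under the new spacing. Replacing your consecutive block by this step-$2$ progression (and adjusting the column map accordingly) closes the gap and recovers the paper's proof verbatim.
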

\begin{proof}
Let $d=t+2$, where $t\in [4,n-5]$, and $m=\frac{n-1}{2}$. By Theorem \ref{SAMS(n,2)} there exists a regular SAMS$(n,2)$, $W$ or $W^*$. By Theorem \ref{compat-R3}, to show the conclusion,  we need only to
construct a regular SMS$(n,t)$, which is compatible with the regular SAMS$(n,2)$.

By Theorem \ref{SFD} there exists an SFD$(t,n)$ over $[1,nt]$, denoted by $C=(c_{i,j})$.
The Latin square of order $n$, $B$, and the function $f$ are both from the proof of Theorem \ref{SAMS(n,2)}.
When $t=2e+1$, we put $c_{i,j}$, $i\in I_t$, $j\in I_n$, into entry
$(f(\langle2i-2e-1+m\rangle_n,\langle2j+m\rangle_n),\langle 2j+m\rangle_n)$ of $B$,
the other entries of $B$ are filled by $0$, denoted by $D$.
When $t=2e$, we put $c_{i,j}$,
$i\in I_{t}$, $j\in I_n$, into entry $(f(\langle2i'-2e-1+m\rangle_n,\langle2j+m\rangle_n),\langle 2j+m\rangle_n)$ of $B$,
where
\begin{center}
$ i'= \left\{
          \begin{array}{ll}
             i,     &  if \ \ \ i\in[1,e], \\
               i+1,  & if \ \ \ i\in[e+1,2e].\\
          \end{array}
       \right. $
\end{center}
the other entries of $B$ are filled by $0$, also denoted by $D$.

Firstly, we shall show that $D$ is a regular SMS$(n,t)$.

(i) Note that we put the elements in the $j$-th column
of $C$ into the $\langle 2j+m\rangle_n$-th column of $D$, where $j\in I_n$,
and $\{j|j\in I_n\}=\{\langle 2j+m\rangle_n|j\in I_n\}=I_n$, therefore there are $t$ non-zero
elements in each column of $D$ and  the columns of $D$ will
have a constant sum $\frac{(1+nt)t}{2}$ since the columns of $C$ have constant sum $\frac{(1+nt)t}{2}$.

(ii)
For each $ j\in I_n$, the elements in the set $\mathcal{A}_1=\{c_{i,\langle j+i\rangle_n}| i\in I_t\}$ are putted into the
same row of $D$ and it is clear that $|\mathcal{A}_1|=t$ and the elements in the set $\mathcal{A}_1$ are
exactly in the same froward diagonal of $C$. In fact, the element $c_{i,\langle j+i\rangle_n}$ is putted into
$f(\langle2i-2e-1+m\rangle_n,\langle2\langle j+i\rangle_n+m\rangle_n)$-th row of $D$.
Let
$f(\langle2i-2e-1+m\rangle_n,\langle2\langle j+i\rangle_n+m\rangle_n)=\alpha$, then by the definition of the function $f$ from the proof
of Theorem \ref{SAMS(n,2)}, we have
\begin{center}
$b_{\alpha,\langle2\langle j+i\rangle_n+m\rangle_n}=\langle2i-2e-1+m\rangle_n=\langle2\alpha+2(j+i)+m-1\rangle_n$,
\end{center}
it follows that
$\alpha=\langle-j-e\rangle_n$ and $\{j|j\in I_n\}=\{\langle-j-e\rangle_n|j\in I_n\}=I_n$, which is independent of the parameter $i$.
This implies that the elements in the set $\mathcal{A}_1=\{c_{i,\langle j+i\rangle_n}| i\in I_t\}$ lie in the
same row of $D$. Clearly, there are $t$ non-zero elements in each row of $D$ from $|\mathcal{A}_1|=t$ and
so all forward diagonals of $C$ become the rows of $D$.
Then
the rows of $D$ will also have a constant sum $\frac{(1+nt)t}{2}$ since all forward diagonals of $C$ have the same sum $\frac{(1+nt)t}{2}$.

(iii) Let 
\begin{center}
$ \Delta= \left\{
          \begin{array}{ll}
             \bigcup\limits_{i=1}^t\{2i-2e-1+m\},     &  if \ \ \ t=2e+1, \\
              \bigcup\limits_{i=1}^t\{2i'-2e-1+m\},  & if \ \ \ t=2e.\\
          \end{array}
       \right. $
\end{center}
Clearly $|\Delta|=t$.
It is easy to check that there are exactly $t$ non-zero elements in
each main diagonal of $D$ since $B$ is a diagonal Latin square and  for $i_1,j_1\in I_n$,
\begin{center}
$  \left\{
          \begin{array}{ll}
             d_{i_1,j_1}=0,     &  if \ \ \ b_{i_1,j_1}\not\in \Delta, \\
              d_{i_1,j_1}\neq0,  & if \ \ \ b_{i_1,j_1}\in \Delta.\\
          \end{array}
       \right. $
\end{center}
Now we compute the main diagonal-sum of $D$.

When $t=2e$, for $i\in I_e$, $j\in I_n$, the elements $c_{i,j}$ and $c_{t+1-i,n+1-j}$ are putted into entries
$(f(\langle2i-2e-1+m\rangle_n,\langle2j+m\rangle_n),\langle 2j+m\rangle_n)$
and $(f(\langle2(t+1-i+1)-2e-1+m\rangle_n,\langle2(n+1-j)+m\rangle_n),\langle 2(n+1-j)+m\rangle_n)$
of $B$ respectively. Let $f(\langle2i-2e-1+m\rangle_n,\langle2j+m\rangle_n)=\sigma$.
Then we have
\begin{center}
$b_{\sigma,\langle2j+m\rangle_n}=\langle2\sigma+(\langle2j+m\rangle_n)+1\rangle_n=\langle2i-2e-1+m\rangle_n$,
\end{center}

\begin{center}
$\begin{aligned}  \ \ \ \ b_{n+1-\sigma,n+1-\langle2j+m\rangle_n}&=(n+1)-b_{\sigma,\langle2j+m\rangle_n}=
(n+1)-\langle (2i-2e-1+m)\rangle_n.
\end{aligned}$
\end{center}
It is easy to compute that

\begin{center}
$\langle2(t+1-i+1)-2e-1+m\rangle_n=\langle2(2e+2-i)-2e-1+m\rangle_n=\langle(n+1)-(2i-2e-1+m)\rangle_n$,
\end{center}

\begin{center}
$\langle 2(n+1-j)+m\rangle_n=(n+1)-\langle2j+m\rangle_n$.
\end{center}
Therefore,

\begin{center}
$\begin{aligned} & \ \ \ \ f(\langle2(t+1-i)-2e-1+m\rangle_n,\langle2(n+1-j)+m\rangle_n) \\ &
=f(\langle(n+1)-(2i-2e-1+m)\rangle_n,\langle (n+1)-(2j+m)\rangle_n)\\ &=
(n+1)-\sigma.
\end{aligned}$
\end{center}
It follows that
for $i\in I_e$, $j\in I_n$,  $c_{i,j}$ and $c_{t+1-i,n+1-j}$ are putted into the entries
$(\sigma,\langle 2j+m\rangle_n)$ and $(n+1-\sigma,n+1-\langle 2j+m\rangle_n)$
of $B$ respectively.  It is easy to see that
\begin{center}
$d_{\sigma,\langle 2j+m\rangle_n}+d_{n+1-\sigma,n+1-\langle 2j+m\rangle_n}=c_{i,j}+c_{t+1-i,n+1-j}=1+nt$.
\end{center}
Then the sum of elements in each diagonal of $D$
is also a constant sum $\frac{(1+nt)t}{2}$ since there are exactly $t$ non-zero elements in each diagonals.

When $t=2e+1$,
we have $d_{m+1,m+1}=c_{e+1,m+1}=\frac{1+nt}{2}$ because the Remark 3 and the element $c_{e+1,m+1}$ is putted into the entry
\begin{center}
$(f(\langle2(e+1)-2e-1+m\rangle_n,\langle2(m+1)+m\rangle_n),\langle 2(m+1)+m\rangle_n)=(f(m+1,m+1),m+1)=(m+1,m+1)$
\end{center}
of $D$ followed from $b_{i,j}=\langle2i+j-1\rangle_n=\langle2(m+1)+(m+1)-1\rangle_n=m+1$ when $i=j=m+1$.
In the similar way to the proof of the case $t=2e$, we have that for $i\in I_e$, $j\in I_n$,
the elements $c_{i,j}$ and $c_{t+1-i,n+1-j}$ are also putted into the entries
$(\sigma,\langle 2j+m\rangle_n)$ and $(n+1-\sigma,n+1-\langle 2j+m\rangle_n)$
of $B$ respectively.
It follows that the sum of elements in each diagonal of $D$
is also a constant sum $(1+nt)e+\frac{1+nt}{2}=\frac{(1+nt)t}{2}$.

Secondly,
we shall show that $D$ is compatible with the regular SAMS$(n,2)$ constructed from Theorem \ref{SAMS(n,2)}.
When $t=2e+1$, denote

\mbox{}\hspace{0.25in} $\Omega(D)=\{(i,j)|d_{i,j}\neq0,\ i,j\in I_n\}$

\mbox{}\hspace{0.62in} $=\{(f(\langle2i-2e-1+m\rangle_n,\langle2j+m\rangle_n),\langle 2j+m\rangle_n)| i\in I_t,\ j\in I_n\}$

\mbox{}\hspace{0.62in} $=\{(x,y)|b_{x,y}\in\bigcup\limits_{i=1}^t \{\langle2i-2e-1+m\rangle_n\},\ x,\ y\in I_n\}$

\vskip 6pt
\noindent
When $t=2e$, denote

\vskip 6pt
\mbox{}\hspace{0.16in} $\Omega(D)=\{(i,j)|d_{i,j}\neq0,\ i,j\in I_n\}
=\{(x,y)|b_{x,y}\in\bigcup\limits_{i=1}^t \{\langle2i'-2e-1+m\rangle_n\},\ x,\ y\in I_n\}$.\\
Clearly $e=\lfloor\frac{t}{2}\rfloor\leq m-2$ since $t\leq n-5=2m+1-5=2m-4$ followed from $d=t+2\leq n-3$.
So it is easy to verify that
\begin{center}
$\{m-2,m,m+2,m+4\}\cap \{\langle2i-2e-1+m\rangle_n|i\in I_t\}=\emptyset$ when $t=2e$,\\
$\{m-2,m,m+2,m+4\}\cap \{\langle2i'-2e-1+m\rangle_n|i\in I_t\}=\emptyset$ when $t=2e+1$.
\end{center}
It follows that
\begin{center}
$\{(x,y)|b_{x,y}\in\{m-2,m,m+2,m+4\}\}\cap \Omega(D)=\emptyset$.
\end{center}
Let $W$ and $W^*$ be the same as those of Theorem \ref{SAMS(n,2)},  that is,
$W$ is an SAMS$(n,2)$ for $n\equiv 1\pmod 6$ and $W^*$ is an SAMS$(n,2)$ for $n\equiv 5\pmod 6$.
By Remark 1, we have
$\Omega(W)=\{(i,j)|b_{i,j}\in\{m,m+2\},\ i,j\in I_n\}$
and
$\Omega(W^*)\subset\{(i,j)|b_{i,j}\in\{m-2,m,m+2,m+4\},\ i,j\in I_n\}$.
Then $\Omega(W)\cap \Omega(D)=\emptyset$ and $\Omega(W^*)\cap \Omega(D)=\emptyset$,
it follows that $W$ and $D$ are compatible, and $W^*$ and $D$ are compatible.

So $D+W$ and $D+W^*$ are the regular SAMS$(n,d)$s by Theorem \ref{compat-R3}.
\end{proof}

To illustrate the proof of Theorem \ref{mainth4-1}, we give an example in the following.
\begin{exam}\label{Ex4}
There exists a regular SAMS$(n,d)$ for $(n,d)\in\{(11,8),(13,9)\}$.
\end{exam}
\begin{proof}
For $(n,d)=(11,8)$, then $m=5$, by Theorem \ref{SFD}, we get

$$C=\left(\begin{smallmatrix}
11&5&10&4&9&	3	&8&2&7&1&	6		\\
1	&	2	&	3	&	4	&	5	&	6	&	7	&8&9&10&11\\
6&11&5&10&4&9&3&8&2&7&1  	\\
11&5&10&4&9&	3	&8&2&7&1&	6		\\
1	&	2	&	3	&	4	&	5	&	6	&	7	&8&9&10&11\\
6&11&5&10&4&9&3&8&2&7&1  	\\
        \end{smallmatrix}\right)+11\left(\begin{smallmatrix}
0	&	0	&	0	&	0	&	0	&	0	&	0&	0	&	0	&	0	&	0	\\
1	&	1	&	1	&	1	&	1	&	1	&	1&	1	&	1	&	1	&	1	\\
2  &   2	&	2	&	2	&	2	&	2	&	2&	2	&	2	&	2	&	2	\\
3	&	3	&	3	&	3	&	3	&	3	&	3&	3	&	3	&	3	&	3	\\
4	&	4	&	4	&	4	&	4	&	4	&	4&	4	&	4	&	4	&	4	\\
5  &   5	&	5	&	5	&	5	&	5	&	5&	5	&	5	&	5	&	5	\\
        \end{smallmatrix}\right)
        =\left(\begin{smallmatrix}
11&5&10&4&9&	3	&8&2&7&1&	6		\\
12&	13&	14&15&	16&	17&	18&	19&	20&	21&	22\\
28&	33&	27&	32&	26&	31&	25&	30&	24&	29	&23\\
44&38&	43&	37&	42&	36&	41&	35&	40&	34&	39\\
45&	46&	47&	48&	49&	50&	51&	52&	53&	54&	55\\
61&	66&60&	65&	59&	64&	58&	63&	57&	62&	56\\
        \end{smallmatrix}\right).
$$
\noindent
By the proof of Theorem \ref{mainth4-1}, we obtain
$$C'=C+
\left(\begin{smallmatrix}
22	&	22	&	22	&	22	&	22	&	22	&	22&	22	&	22	&	22	&	22	\\
22	&	22	&	22	&	22	&	22	&	22	&	22&	22	&	22	&	22	&	22	\\
22	&	22	&	22	&	22	&	22	&	22	&	22&	22	&	22	&	22	&	22	\\
22	&	22	&	22	&	22	&	22	&	22	&	22&	22	&	22	&	22	&	22	\\
22	&	22	&	22	&	22	&	22	&	22	&	22&	22	&	22	&	22	&	22	\\
22	&	22	&	22	&	22	&	22	&	22	&	22&	22	&	22	&	22	&	22	\\
        \end{smallmatrix}\right)
        =\left(\begin{smallmatrix}
33&	27&	32&	26&	31&	25&	30&	24&	29&	23&	28\\
34&	35&	36&	37&	38&	39&	40&	41&	42&	43&	44\\
50&	55&	49&	54&	48&	53&	47&	52&	46&	51&	45\\
66&	60&	65&	59&	64&	58&	63&	57&	62&	56&	61\\
67&	68&	69&	70&	71&	72&	73&	74&	75&	76&	77\\
83&	88&	82&	87&	81&	86&	80&	85&	79&	84&	78\\
        \end{smallmatrix}\right).
$$
The arrays $B$ and $D$ are listed below by the proof of Theorem \ref{mainth4-1} and $W^*$ comes from Example \ref{Ex2}.

\begin{center}
 {\renewcommand\arraystretch{0.7}
\setlength{\arraycolsep}{4pt}
\footnotesize
        $ B=\begin{array}{|c|c|c|c|c|c|c|c|c|c|c|}   \hline
\textbf{2}	&	3	&	\textbf{4}	&	5	&	6	&	7	&	\textbf{8}	&	9	&	\textbf{10}	&	\textbf{11}	&	 \textbf{1}	\\\hline
\textbf{4}	&	5	&	6	&	7	&	\textbf{8}	&	9	&	\textbf{10}	&	\textbf{11}	&	\textbf{1}	&	2	&	3	 \\\hline
6	&	7	&	\textbf{8}	&	9	&	\textbf{10}	&	\textbf{11}	&	\textbf{1}	&	\textbf{2}	&	3	&	\textbf{4}	 &	 5	\\\hline
\textbf{8}	&	9	&	\textbf{10}	&	\textbf{11}	&	\textbf{1}	&	\textbf{2}	&	3	&	\textbf{4	}&	5	&	6	 &	 7	\\\hline
\textbf{10}	&	\textbf{11}	&	\textbf{1}	&	\textbf{2}	&	3	&	\textbf{4}	&	5	&	6	&	7	&	\textbf{8}	 &	 9	\\\hline
\textbf{1}	&	\textbf{2}	&	3	&	\textbf{4}	&	5	&	6	&	7	&	\textbf{8}	&	9	&	\textbf{10}	&	 \textbf{11}	\\\hline
3	&	\textbf{4}	&	5	&	6	&	7	&	\textbf{8}	&	9	&	\textbf{10}	&	\textbf{11}	&	\textbf{1}	&	 \textbf{2}	\\\hline
5	&	6	&	7	&	\textbf{8}	&	9	&	\textbf{10}	&	\textbf{11}	&	\textbf{1}	&	\textbf{2}	&	3	&	 \textbf{4}	\\\hline
7	&	\textbf{8}	&	9	&	\textbf{10}	&	\textbf{11}	&	\textbf{1}	&	\textbf{2}	&	3	&	\textbf{4}	&	5	 &	 6	\\\hline
9	&	\textbf{10}	&	\textbf{11}	&	\textbf{1}	&	\textbf{2}	&	3	&	\textbf{4}	&	5	&	6	&	7	&	 \textbf{8}	\\\hline
\textbf{11}	&	\textbf{1}	&	\textbf{2}	&	3	&	\textbf{4}	&	5	&	6	&	7	&	\textbf{8}	&	9	&	 \textbf{10}	\\\hline
\end{array}$ \ , \
$D=\begin{array}{|c|c|c|c|c|c|c|c|c|c|c|}        \hline
42	&		&	51	&		&		&		&	66	&		&	68	&	24	&	82	\\\hline
46	&		&		&		&	61	&		&	67	&	30	&	88	&	41	&		\\\hline
	&		&	56	&		&	77	&	25	&	83	&	40	&		&	52	&		\\\hline
62	&		&	76	&	31	&	78	&	39	&		&	47	&		&		&		\\\hline
75	&	26	&	84	&	38	&		&	53	&		&		&		&	57	&		\\\hline
79	&	37	&		&	48	&		&		&		&	63	&		&	74	&	32	\\\hline
	&	54	&		&		&		&	58	&		&	73	&	27	&	85	&	36	\\\hline
	&		&		&	64	&		&	72	&	33	&	80	&	35	&		&	49	\\\hline
	&	59	&		&	71	&	28	&	86	&	34	&		&	55	&		&		\\\hline
	&	70	&	23	&	81	&	44	&		&	50	&		&		&		&	65	\\\hline
29	&	87	&	43	&		&	45	&		&		&		&	60	&		&	69	\\\hline
\end{array}$}
\end{center}
\begin{center}
 {\renewcommand\arraystretch{0.7}
\setlength{\arraycolsep}{3pt}
\footnotesize
\hspace{15pt}
$W^*=\begin{array}{|c|c|c|c|c|c|c|c|c|c|c|}\hline
	&	11	&		&		&		&	1	&		&		&		&		&		\\\hline
	&	10	&		&	22	&		&		&		&		&		&		&		\\\hline
	&		&		&	9	&		&		&		&		&		&		&	21	\\\hline
	&		&		&		&		&		&		&		&	20	&		&	8	\\\hline
	&		&		&		&		&		&	19	&		&	7	&		&		\\\hline
	&		&		&		&	18	&		&	6	&		&		&		&		\\\hline
	&		&	17	&		&	16	&		&		&		&		&		&		\\\hline
15	&		&	5	&		&		&		&		&		&		&		&		\\\hline
4	&		&		&		&		&		&		&	14	&		&		&		\\\hline
	&		&		&		&		&		&		&	3	&		&	13	&		\\\hline
	&		&		&		&		&	12	&		&		&		&	2	&		\\\hline
\end{array}$}\ .
\end{center}
Here, all of above empty entries indicate 0. It is easy to check that $D+W^*$ is a regular
SAMS$(11,8)$.

For $(n,d)=(13,9)$,
then $m=6$, by Theorem \ref{SFD}, we have
$$C=\left(\begin{smallmatrix}
1&	2&	3&	4&	5&	6&	8&	9&	10&	11&	12&	13&	7\\
7&	13&	12&	11&	10&	9&	8&	6&	5&	4	&3&	2&	1\\
13&	6&	12&	5&	11&	4&	10&	3&	9&	2&	8&	1&	7\\
1&	2&	3&	4&	5&	6&	7&	8&	9&	10&	11&	12&	13\\
7&	13&	6&	12&	5&	11&	4&	10&	3&	9&	2&	8&	1\\
13&	12&	11&	10&	9&	8&	6&	5&	4&	3&	2&	1&	7\\
7&	1&	2&	3&	4&	5&	6&	8&	9&	10&	11&	12&	13\\
        \end{smallmatrix}\right)+13\left(\begin{smallmatrix}
0	&	0	&	0	&	0	&	0	&	0	&	0&	0	&	0	&	0	&	0&	0	&	0	\\
1	&	1	&	1	&	1	&	1	&	1	&	1&	1	&	1	&	1	&	1&	1	&	1	\\
2  &   2	&	2	&	2	&	2	&	2	&	2&	2	&	2	&	2	&	2&	2	&	2	\\
3	&	3	&	3	&	3	&	3	&	3	&	3&	3	&	3	&	3	&	3&	3	&	3	\\
4	&	4	&	4	&	4	&	4	&	4	&	4&	4	&	4	&	4	&	4&	4	&	4	\\
5  &   5	&	5	&	5	&	5	&	5	&	5&	5	&	5	&	5	&	5&	5	&	5	\\
6	&	6	&	6	&	6	&	6	&	6	&	6&	6	&	6	&	6	&	6&	6	&	6	\\
7  &   7	&	7	&	7	&	7	&	7	&	7&	7	&	7	&	7	&	7&	7	&	7	\\
        \end{smallmatrix}\right)
        =\left(\begin{smallmatrix}
1&	2&	3&	4&	5&	6&	8&	9&	10&	11&	12&	13&	7\\
20&	26&	25&	24&	23&	22&	21&	19&	18&	17&	16&	15&	14\\
39&	32&	38&	31&	37&	30&	36&	29&	35&	28&	34&	27&	33\\
40&	41&	42&	43&	44&	45&	46&	47&	48&	49&	50&	51&	52\\
59&	65&	58&	64&	57&	63&	56&	62&	55&	61&	54&	60&	53\\
78&	77&	76&	75&	74&	73&	71&	70&	69&	68&	67&	66&	72\\
85&	79&	80&	81&	82&	83&	84&	86&	87&	88&	89&	90&	91\\
        \end{smallmatrix}\right).
$$
By the proof of Theorem \ref{mainth4-1}, we obtain
$$C'=\left(\begin{smallmatrix}
27&	28&	29&	30&	31&	32&	34&	35&	36&	37&	38&	39&	33\\
46&	52&	51&	50&	49&	48&	47&	45&	44&	43&	42&	41&	40\\
65&	58&	64&	57&	63&	56&	62&	55&	61&	54&	60&	53&	59\\
66&	67&	68&	69&	70&	71&	72&	73&	74&	75&	76&	77&	78\\
85&	91&	84&	90&	83&	89&	82&	88&	81&	87&	80&	86&	79\\
104&	103&	102&	101&	100&	99&	97&	96&	95&	94&	93&	92&	98\\
111&	105&	106&	107&	108&	109&	110&	112&	113&	114&	115&	116&	117\\
        \end{smallmatrix}\right).
$$
The arrays $D$ and $W$ are listed below by the proof of
Theorem \ref{mainth4-1} and Theorem \ref{SAMS(n,2)} respectively.
\begin{center}
 {\renewcommand\arraystretch{0.7}
\setlength{\arraycolsep}{4pt}
\footnotesize
        $ D=\begin{array}{|c|c|c|c|c|c|c|c|c|c|c|c|c|}   \hline
&	42	&	&53&	&	78	&	&85&&		103&&		106&37\\\hline
&	60&	&	77&		&79	&	&104	&	&105	&36	&&	43\\\hline
&	76	&&	86	&	&98&	&	111&	35	&&	44&	&	54\\\hline
&	80	&&	92	&&	117&	34	&	&45	&&	61	&&	75\\\hline
&	93	&&	116&	32&	&	47	&&	55	&&	74	&&	87\\\hline
&	115&	31&	&	48&	&	62	&&	73	&&	81	&&	94\\\hline
30&	&	49&	&	56	&	&	72	&&	88	&&	95	&&	114\\\hline
50&	&	63	&&	71	&	&	82	&&	96	&&	113&	29&	\\\hline
57&	&	70	&&	89	&	&	97&		&112&	28	&&	51	&\\\hline
69&	&	83	&&	99	&	&	110&	27	&&	52	&&	64&	\\\hline
90&	&	100	&&	109&		33&		&46	&&	58	&&	68&	\\\hline
101&&	108&39	&	&	40	&	&65&&	67	&&	84&	\\\hline
107&	38&	&	41	&	&	59&	&	66&	&	91&	&	102&	\\\hline
\end{array}$ \ , }
\end{center}
\begin{center}
 {\renewcommand\arraystretch{0.7}
\setlength{\arraycolsep}{4pt}
\footnotesize
        $W=\begin{array}{|c|c|c|c|c|c|c|c|c|c|c|c|c|}   \hline
&	&	&	&	13	&&	1	&	&	&	&	&	&\\\hline
&	&	26	&&	12	&	&	&	&	&	&	&	&\\\hline
25	&&	11	&	&	&	&	&	&	&	&	&	&\\\hline
10	&	&	&	&	&	&	&	&	&	&	&24	&\\\hline
&	&	&	&	&	&	&	&	&	23	&&	9	&\\\hline
&	&	&	&	&	&	&	22	&&	8	&	&	&\\\hline
&	&	&	&	&	21	&&	7	&	&	&	&	&\\\hline
&	&	&	20	&	&19	&	&	&	&	&	&	&\\\hline
&	18	&&	6	&	&	&	&	&	&	&	&	&\\\hline
&	5	&	&	&	&	&	&	&	&	&	&&	17\\\hline
&	&	&	&	&	&	&	&	&	&	16	&&	4\\\hline
&	&	&	&	&	&	&	&	15&	&	3	&	&\\\hline
&	&	&	&	&	&	14	&&	2	&	&	&	&\\\hline
\end{array}$ \ . }
\end{center}
It is easy to verify that $D+W$ is a regular
SAMS$(13,9)$.
\end{proof}

\section{The Proof of Theorem \ref{mainth} }
In this section, we shall give  the proof of our main Theorem \ref{mainth}. Firstly,
we obtain the existence of a regular SAMS$(n,4)$ for $n\equiv1,5 \pmod 6$ and $n\geq 7$ by direct construction.
\begin{thm}\label{mainth5-1}
There exists a regular SAMS$(n,4)$ for any  $n\equiv1,5 \pmod 6$ and $n\geq 7$.
\end{thm}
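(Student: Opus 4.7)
The plan is to imitate the direct construction from Theorem \ref{SAMS(n,2)}, replacing the $2 \times n$ array $A$ by a $4 \times n$ array whose entries form a permutation of $[1,4n]$. Writing $n = 2m+1$, I will build $A=(a_{i,j})$ so that its column-sum set $S_1$ and its forward-diagonal-sum set $S_2$ together cover a consecutive integer interval of length $2n$ missing exactly two interior values; those two missing values will be supplied by the two main-diagonal sums of the final square, producing the required $2n+2$ consecutive integers.

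The placement reuses the diagonal Latin square $B=(\langle 2i+j-1\rangle_n)$ and the bijection $g(s)=\langle m+2s-1\rangle_n$ from the proof of Theorem \ref{SAMS(n,2)}, but now activates four levels of $B$ instead of two: for each $s\in I_n$, I drop $a_{1,s},a_{2,s},a_{3,s},a_{4,s}$ into the unique cells of column $g(s)$ of $B$ where $B$ takes the values $m-2,m,m+2,m+4$ respectively. Because the level-shift by $2$ between consecutive rows matches the column-shift by $2$ in $g$, the same bijection argument as in Theorem \ref{SAMS(n,2)} shows that entries in a common column of $A$ remain in a common column of the output $W_4$, while entries along a length-$4$ forward diagonal of $A$ collect into a common row of $W_4$. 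Hence $C(W_4)=S_1$ and $R(W_4)=S_2$. Moreover, because $B$ is a diagonal Latin square, each of the four symbols $m-2,m,m+2,m+4$ appears exactly once on each main diagonal, so each main diagonal of $W_4$ contains exactly four nonzero entries and $W_4$ is regular; the two diagonal sums $l(W_4),r(W_4)$ can then be read off explicitly from the positions of these four symbols on the diagonals of $B$ together with the inverse of $g$.

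The central design task is to choose the rows of $A$ so that two rows carry the small half $[1,2n]$ and two carry the large half $[2n+1,4n]$, with a localized exchange of a small-large pair near the middle column tuned so that $S_1$ and $S_2$ each become arithmetic progressions of step $2$ whose union is the $(2n$-value$)$ interval missing exactly $\{l(W_4),r(W_4)\}$. I expect to pattern the four rows on a doubled version of the array used in Theorem \ref{SAMS(n,2)}, with the ``anomalous'' column (the one playing the role of $3m+1$ in that proof) again supplying the gap. The main obstacle will be simultaneously controlling $S_1$, $S_2$, and the two prescribed diagonal sums across both residue classes: I expect $n\equiv 1\pmod 6$ to succeed directly, while $n\equiv 5\pmod 6$ will require an auxiliary column swap on $W_4$ (analogous to passing from $W$ to $W^*$ in Theorem \ref{SAMS(n,2)}) to correct the left-diagonal sum without disturbing the column-sums, row-sums, the right-diagonal sum, or the regularity. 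Small base cases such as $n=7$ and $n=11$ may need to be exhibited explicitly.
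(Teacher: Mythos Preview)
Your framework matches the paper's: build a $4\times n$ array $A$ on $[1,4n]$ whose column sums and forward-diagonal sums together account for $2n$ of the needed $2n+2$ consecutive integers, then embed $A$ into the diagonal Latin square $B$ so that columns stay columns and forward diagonals become rows, with each main diagonal inheriting exactly four nonzero entries. The paper carries out precisely this program, but with different concrete choices: it activates the four levels $\{n-4,n-2,n,2\}$ of $B$ (arising from $b_{i,j}=\langle 2i'-6\rangle_n$ for $i'\in I_4$) via the column map $g_c(j')=\langle 2j'-3\rangle_n$, rather than your $\{m-2,m,m+2,m+4\}$ and $g(s)=\langle m+2s-1\rangle_n$.

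The genuine gap in your proposal is that the explicit array $A$ is never written down. You correctly flag its construction as ``the central design task'' and list the structural features it should have, but you stop short of producing formulas and verifying the sum sets. This is where all the content lies: the paper commits to explicit rows (e.g.\ $a_{2,j}=n+2-j$, $a_{4,j}=4n+1-j$, and parity-dependent piecewise formulas for $a_{1,j},a_{3,j}$ with a single exceptional entry $a_{3,1}=1$), and then computes directly that the column and forward-diagonal sums give $[7n+2,\,9n+2]\setminus\{8n+2\}$, that $r(D)=9n+3$, and that $l(D)=8n+2$. Without a concrete $A$, your outline remains a plan rather than a proof, and the experience of Theorem \ref{SAMS(n,2)} already shows the details are delicate.

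Two smaller points where your expectations diverge from what actually happens. First, the pattern is not ``two interior gaps'': in both Theorem \ref{SAMS(n,2)} and here, the row/column sums miss exactly \emph{one} interior value (supplied by one diagonal), while the other diagonal extends the interval by one at an end. Second, your anticipation that $n\equiv 5\pmod 6$ will require a corrective column swap analogous to $W\mapsto W^*$ is not borne out: with the paper's particular $A$ and level set, the left-diagonal sum equals $8n+2$ uniformly in both residue classes, so no adjustment is needed. If you insist on your levels $\{m-2,m,m+2,m+4\}$ you may indeed face such a correction, but the paper sidesteps it entirely by its choice of placement.
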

\begin{proof}
For any  $n\equiv1,5 \pmod 6$ and $n\geq 7$, it can be written as $n=2m+1$, where $m\geq 3$. We construct a
special array $A=(a_{i,j})$, $i\in I_4$, $j\in I_n$, where

$$    a_{1,j}= \left\{
          \begin{array}{ll}
             2n+\frac{j+1}{2},     &  if \ \  j\ \ is \ \ odd, \\
          2n+m+1+ \frac{j}{2},  &  if \ \  j\ \ is \ \ even, \\
             \end{array}
       \right. $$
       \vskip 12pt
 $\mbox{}\hspace{1.50in} a_{2,j}= n+2-j, $

 $$        a_{3,j}= \left\{
          \begin{array}{ll}
           1 ,  &   if  \ \ j=1, \\
            n+\frac{j+1}{2},    &   if  \ \ j\geq3 \ \  is\ \ odd  ,\\
             n+m+1+\frac{j}{2},    &  if  \ \ j \ \ is \ \ even, \\
                        \end{array}  \right.    $$
        \vskip 8pt
          $ \mbox{}\hspace{1.50in}  a_{4,j}=4n+1-j.$

\noindent It is easy to calculate that
            $$A_1=\bigcup\limits_{j=1}^n {\{a_{1,j}\}}=[2n+1,3n],\ \ \
         A_2=\bigcup\limits_{j=1}^n {\{a_{2,j}\}}=[2,n+1],$$

         $$A_3=\bigcup\limits_{j=1}^n{\{a_{3,j}\}}=[n+2,2n]\cup\{1\}, \ \ \
         A_4=\bigcup\limits_{j=1}^n {\{a_{4,j}\}}=[3n+1,4n].$$
 Then $G(A)=A_1\cup A_2\cup A_3\cup A_4=[1,4n]$. By a simple calculation, the following is obtained.

\mbox{}\hspace{0.35in}$C(A)=\bigcup\limits_{j=1}^n {\{\sum\limits_{i=1}^4a_{i,j}\}}=\{\sum\limits_{i=1}^4a_{i,1}\}\cup\left(\bigcup\limits_{e=1}^m{
\{\sum\limits_{i=1}^4a_{i,2e+1},\sum\limits_{i=1}^4a_{i,2e}\}}\right)$

\mbox{}\hspace{1.60in}  $=\{7n+3\}\cup\left(\bigcup\limits_{e=1}^m\{8n+3-2e,9n+4-2e\}\right)$

\mbox{}\hspace{1.60in}  $=\{7n+3,8n+1,9n+2\}\cup\left(\bigcup\limits_{e=1}^{m-1}\{8n+1-2e,9n+2-2e\}\right).$

\mbox{}\hspace{0.80in} $G_3=\bigcup\limits_{e=1}^{m-1}\{a_{1,2e-1}+a_{2,2e}+a_{3,2e+1}+a_{4,2e+2}\}$

\mbox{}\hspace{1.00in}  $=\bigcup\limits_{e=1}^{m-1}\{(2n+e)+(n+2-2e)+(n+e+1)+(4n+1-2e-2)\}$

\mbox{}\hspace{1.00in}  $=\bigcup\limits_{e=1}^{m-1}\{8n+2-2e\}.$

\mbox{}\hspace{0.80in}  $G_4=\bigcup\limits_{e=1}^{m-1}\{a_{1,2e}+a_{2,2e+1}+a_{3,2e+2}+a_{4,2e+3}\}$

\mbox{}\hspace{1.00in}   $=\bigcup\limits_{e=1}^{m-1}\{(2n+m+1+e)+(n+2-2e-1)+(n+m+1+e+1)+(4n+1-2e-3)\}$

\mbox{}\hspace{1.00in}   $=\bigcup\limits_{e=1}^{m-1}\{9n+1-2e\},$

\mbox{}\hspace{0.80in}  $G_5=\{a_{1,n-2}+a_{2,n-1}+a_{3,n}+a_{4,1}\}$

\mbox{}\hspace{1.00in}  $=\{(2n+m)+(n+2-n+1)+(n+m+1)+(4n+1-1)\}$

 \mbox{}\hspace{1.00in} $=\{8n+3\},$

\mbox{}\hspace{0.80in}  $G_6=\{a_{1,n-1}+a_{2,n}+a_{3,1}+a_{4,2}\}$

\mbox{}\hspace{1.00in}    $=\{(2n+m+1+m)+(n+2-n)+1+(4n+1-2)\}$

\mbox{}\hspace{1.00in} $=\{7n+2\},$

  \mbox{}\hspace{0.80in}       $G_7=\{a_{1,n}+a_{2,1}+a_{3,2}+a_{4,3}\}$

 \mbox{}\hspace{1.00in}        $=\{(2n+m+1)+(n+2-1)+(n+m+1+1)+(4n+1-3)\}$

  \mbox{}\hspace{1.00in}         $=\{9n+1\}.$

\noindent Denote $F(C)=G_3\cup G_4\cup G_5\cup G_6\cup G_7$, then $F(C)$ is the set of forward diagonal-sums.
 Clearly,

\mbox{}\hspace{0.00in} $C(A)\cup F(C)$

\mbox{}\hspace{-0.20in} $=\{7n+3,8n+1,9n+2,8n+3,7n+2,9n+1\}\cup\left(\bigcup\limits_{e=1}^{m-1}\{8n+1-2e,8n+2-2e,9n+1-2e,9n+2-2e\}\right)$

\mbox{}\hspace{-0.20in} $=\{7n+3,8n+1,9n+2,8n+3,7n+2,9n+1\}\cup[7n+4,8n]\cup[8n+4,9n]$

\mbox{}\hspace{-0.20in} $=[7n+2,9n+2]\setminus\{8n+2\}.$

\vskip 6pt
Let $B=(b_{i,j})=(\langle2i+j-1\rangle_n)$ be the Latin square of order $n$ on $I_n$ which comes from the proof of Theorem \ref{SAMS(n,2)}. Define
\begin{center}
$i=g_r(i',j')=<i'-j'-1>_n,\ \ j=g_c(i',j')=<2j'-3>_n,\ i'\in I_4,\ j'\in I_n,$
\end{center}
we put the element $a_{i',j'}$ of $A$ into cell $(i,j)$ of $B$, other cells of $B$ are
filled by $0$, denoted by $D=(d_{i,j})$.
The elements in the same column of $A$ are also in the same column of $D$ since $j=g_c(i',j')=<2j'-3>_n$, and
the elements in the same forward diagonal of $A$ become the same row of $D$ since
\begin{center}
$g_r(1,j')=g_r(2,<j'+1>_n)=g_r(3,<j'+2>_n)=g_r(4,<j'+3>_n)=<-j'>_n,\ j'\in I_n.$
\end{center}
Then
\begin{center}
$G(D)=[1,4n]$ and $R(D)\cup C(D)=C(A)\cup F(C)=[7n+2,9n+2]\setminus\{8n+2\}.$
\end{center}
We also have

\mbox{}\hspace{1.00in}  $b_{i,j}=<2i+j-1>_n=<2(g_r(i',j')+g_c(i',j')-1>_n$

\mbox{}\hspace{1.23in}  $=<2(i'-j'-1)+(2j'-3)-1>_n$

\mbox{}\hspace{1.23in}  $=<2i'-6>_n,\ i'\in I_4$.

\noindent
It follows that the element $a_{i',j'}$ of $A$ is putted into cell $(i,j)$ of $B$ with $b_{i,j}=n-4,n-2,n,2$
respectively. There exist exactly $4$ non-zero elements in each row, each column and the right diagonal
of $B$, so does $D$.
It is easy to calculate that

\mbox{}\hspace{0.65in} $r(D)=d_{2,n-1}+d_{n,1}+d_{n-2,3}+d_{n-4,5}$

\mbox{}\hspace{0.98in} $=a_{4,1}+a_{3,2}+a_{2,3}+a_{1,4}$

\mbox{}\hspace{0.98in} $=(4n+1-1)+(n+m+1+1)+(n+2-3)+(2n+m+1+2)$

\mbox{}\hspace{0.98in} $=9n+3$

\noindent
since $d_{2,n-1}=a_{4,1}$ followed from $g_r(4,1)=<4-1-1>_n=2,\ g_c(4,1)=<2-3>_n=n-1$,

\ \ $d_{n,1}=a_{3,2}$\ \ \ \ followed from $g_r(3,2)=<3-2-1>_n=n,\ g_c(3,2)=<2\cdot2-3>_n=1$,

\ \ $d_{n-2,3}=a_{2,3}$ followed from $g_r(2,3)=<2-3-1>_n=n-2,\ g_c(2,3)=<2\cdot3-3>_n=3$,

\ \ $d_{n-4,5}=a_{1,4}$ followed from $g_r(1,4)=<1-4-1>_n=n-4,\ g_c(1,4)=<2\cdot4-3>_n=5$.

\noindent
Note that there exist exactly $4$ non-zero elements in the left diagonal of $B$ when $n\equiv 1,5 \pmod 6$,
so does $D$.

\textbf{Case 1}\ \  $n\equiv 1\pmod 6$ and $n\geq 7$.

There exist exactly $4$ non-zero elements  $a_{1,1}$, $a_{2,2+4k}$, $a_{3,2+2k}$, $a_{4,2}$ in the left diagonal of $D$
because

\mbox{}\hspace{0.50in} $g_r(1,1)=<1-1-1>_n= g_c(1,1)=<2\cdot1-3>_n=n-1$,

\mbox{}\hspace{0.50in} $g_r(2,2+4k)=<2-(2+4k)-1>_n= g_c(2,2+4k)=<2(2+4k)-3>_n=2k$,

\mbox{}\hspace{0.50in} $g_r(3,2+2k)=<3-(2+2k)-1>_n=g_c(3,2+2k)=<2(2+2k)-3>_n=4k+1$,

\mbox{}\hspace{0.50in} $g_r(4,2)=<4-2-1>_n=g_c(4,2)=<2\cdot2-3>_n=1$.

\noindent So

\mbox{}\hspace{0.60in} $l(D)=a_{1,1}+a_{2,2+4k}+a_{3,2+2k}+a_{4,2}$

\mbox{}\hspace{0.90in} $=(2n+1)+(n+2-2-4k)+(n+m+1+1+k)+(4n+1-2)$

\mbox{}\hspace{0.90in} $=8n+2$.

\noindent
Then $D$ is a regular SAMS$(n,4)$.

\textbf{Case 2}\ \ $n\equiv 5\pmod 6$ and $n\geq 5$.

We have exactly four non-zero elements  $a_{1,1},a_{2,1+2k},a_{3,1+4k},a_{4,2}$ in the left diagonal of $D$
because

\mbox{}\hspace{0.40in} $g_r(1,1)=<1-1-1>_n=g_c(1,1)=<2\cdot1-3>_n=n-1$,

\mbox{}\hspace{0.40in} $g_r(2,1+2k)=<2-(1+2k)-1>_n=g_c(2,1+2k)=<2(1+2k)-3>_n=4k-1$,

\mbox{}\hspace{0.40in} $g_r(3,1+4k)=<3-(1+4k)-1>_n=g_c(3,1+4k)=<2(1+4k)-3>_n=2k$,

\mbox{}\hspace{0.40in} $g_r(4,2)=<4-2-1>_n= g_c(4,2)=<2\cdot2-3>_n=1$.

\noindent So

\mbox{}\hspace{0.60in} $l(D)=a_{1,1}+a_{2,1+2k}+a_{3,1+4k}+a_{4,2}$

\mbox{}\hspace{0.90in} $=(2n+1)+(n+2-1-2k)+(n+2k+1)+(4n+1-2)$

\mbox{}\hspace{0.90in} $=8n+2$.

\noindent
Then $D$ is also a regular SAMS$(n,4)$.
\end{proof}

We restate our main theorem in the following and prove it.

\textbf{Theorem 1.3}\ \ \
For any $n\geq 5$ and $n\equiv1,5 \pmod 6$, there exists a regular SAMS$(n,d)$ if and only if $2\leq d\leq n-1$.

\noindent\textbf{ Proof} \ \ \
It is clear that there does not exist
a regular SAMS$(n,1)$.

For each $n\equiv1,5 \pmod 6$ and $n\geq 5$, there exists a
regular SAMS$(n,2)$ by Theorem \ref{SAMS(n,2)}.

For each $n\equiv1,5 \pmod 6$, $d=3,5$ and $n>d$, there exists a
regular SAMS$(n,d)$ by Lemma \ref{03}.

For each $n\equiv1,5 \pmod 6$ and $n\geq 7$, there exists a
regular SAMS$(n,4)$ by Theorem \ref{mainth5-1}.

For each $n\equiv1,5 \pmod 6$ and $d=n-1,n-2$, there exists a
regular SAMS$(n,d)$ by Lemmas \ref{01}-\ref{02}.

For each $n\equiv1,5 \pmod 6$ and $d\in[6,n-3]$, there exists a
regular SAMS$(n,d)$ by Theorem \ref{mainth4-1}. The proof is completed.

\vskip 10pt
\noindent\textbf{Acknowledgements}\ \ The authors would like to thank Professor Zhu Lie of Suzhou University for his encouragement and many helpful suggestions.

\vskip 12pt

\end{document}